\documentclass[12pt]{amsart}

\oddsidemargin=0cm \evensidemargin=0cm

\textwidth 160mm \textheight 210mm

\usepackage{epsfig}
\usepackage{amsfonts}
\usepackage{amssymb}
\usepackage{amsmath}
\usepackage{amsthm}
\usepackage{amscd}
\usepackage{amstext}
\usepackage{latexsym}

\usepackage{fancyhdr}

\usepackage{enumerate}

\linespread{1.08}

\newcommand{\mfa}{\mathfrak{a}}

\newcommand{\mfm}{\mathfrak{m}}

\newcommand{\CC}{\mathbf{C}}

\newcommand{\RR}{\mathbf{R}}
\newcommand{\DD}{\mathbf{D}}

\newcommand{\JJ}{\mathcal{J}}

\newcommand{\OO}{\mathcal{O}}

\newcommand{\FF}{\mathcal F}

\newcommand{\EE}{\mathcal E}

\newcommand{\EEt}{\tilde{\mathcal E}}

\newcommand{\Gm}{\Gamma}

\newcommand{\Om}{\Omega}

\newcommand{\ld}{\lambda}

\newcommand{\qa}{\quad}
\newcommand{\vp}{\varphi}

\newcommand{\ep}{\epsilon}

\newcommand{\noi}{\noindent}

\providecommand{\abs}[1]{\left|#1\right|}

\providecommand{\inp}[1]{\langle#1\rangle}

\theoremstyle{plain}

\newtheorem{theorem}{Theorem}[section]

\newtheorem{lemma}[theorem]{Lemma}
\newtheorem{corollary}[theorem]{Corollary}
\newtheorem{proposition}[theorem]{Proposition}

\newtheorem{conjecture}[theorem]{Conjecture}

 \newtheorem{example}[theorem]{\textnormal{\textbf{Example}}}

\newtheorem{remark}[theorem]{Remark}

\theoremstyle{remark}
\newtheorem{remark1}[theorem]{Remark}

\DeclareMathOperator{\Covol}{Covol}

\DeclareMathOperator{\Vol}{Vol}

\DeclareMathOperator{\interior}{int}

\DeclareMathOperator{\PSH}{PSH}

\DeclareMathOperator{\loc}{loc}

\begin{document}

\title[Higher Lelong numbers and convex geometry]{Higher Lelong numbers
and convex geometry}

\author{Dano Kim and Alexander Rashkovskii}

\date{}

\begin{abstract}

\noindent We prove  the reversed Alexandrov-Fenchel inequality for mixed Monge-Amp\`ere masses of plurisubharmonic functions, which generalizes a result of Demailly and Pham. As applications to convex geometry, this gives a complex analytic proof of the reversed Alexandrov-Fenchel inequality for mixed covolumes, which generalizes recent results in convex geometry of Kaveh-Khovanskii, Khovanskii-Timorin, Milman-Rotem and R. Schneider on reversed (or complemented) Brunn-Minkowski and Alexandrov-Fenchel inequalities. Also for toric plurisubharmonic functions in the Cegrell class, we confirm Demailly's conjecture on the convergence of higher Lelong numbers under the canonical approximation.

\end{abstract}

\maketitle

\section{Introduction}

 Let $\vp$ be a plurisubharmonic ({psh}, for short) function with an isolated singularity at a point $p \in \Omega \subset \CC^n$, i.e. $\vp$ is locally bounded in $\Omega \setminus \{ p \}$. Then higher Lelong numbers $\ell_2(\vp, p), \ldots, \ell_n (\vp, p)$ are defined for $\vp$ by (see Section~\ref{cobdd} : cf. \cite{D93}, \cite{DX}, \cite{S85})

 $$ \ell_k (\vp, p) = \int_{\{p\}} (dd^c \vp)^k \wedge (dd^c \log \abs{z} )^{n-k} $$

\noindent in addition to its usual Lelong number which we denote by $\ell_1(\vp, p)$. When $\vp$ is a psh function associated to an $\mathfrak{m}$-primary ideal $\mfa$ (where $\mathfrak{m}$ is the maximal ideal of the point $p$), i.e. $\vp = \frac{1}{2} \sum^N_{j=1} \abs{g_j}^2$ where $g_1, \ldots, g_N$ are generators of $\mfa$, the $n$-th Lelong number $\ell_n (\vp, p)$ recovers the well known Hilbert-Samuel multiplicity of $\mfa$ (see \cite[Lemma 2.1]{D09}).  Furthermore,  all the higher Lelong numbers are well-defined on functions from a class of psh functions generalizing those with isolated singularity, \cite{C04}. It is the Cegrell class $\EE$ (see Section~\ref{cobdd}), which is the largest class of negative psh functions $\vp$ with well-defined Monge-Amp\`ere operator $(dd^c \vp)^n$.

   By Demailly's theorem \cite{D92}, any psh function $\vp$ can be approximated by psh functions $\vp_m$ with analytic singularities such that $ \ell_1 (\vp_m, 0) \to \ell_1 (\vp, 0) $ as $m \to \infty$.

  Regarding the nature of the higher Lelong numbers and its analytic intersection theory, the most fundamental question would be the following conjecture which sits at the intersection of complex analysis, singularity theory and algebraic geometry.

\begin{conjecture}[See \cite{D12}, \cite{R16}, \cite{DGZ}]\label{dem}

Let $\vp$ be a plurisubharmonic function in the Cegrell class $\EE = \EE(B)$ in the unit ball $B$ of $\CC^n$.

(a) (Demailly)  If $\{ \vp_m \}_{m \ge 1}$ is the Demailly approximation sequence of $\vp$ \cite{D92}, then we have the convergence of $k$-th Lelong numbers for   $2 \le k \le n$:

 $$ \ell_k (\vp_m, 0) \to \ell_k (\vp, 0) $$ as $m$ goes to infinity.

(b) (Guedj, Rashkovskii)  If $\ell_n (\vp, 0) = 0$, then $\ell_1 (\vp, 0)=0$.

\end{conjecture}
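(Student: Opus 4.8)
The plan is to translate both statements into convex geometry through the toric dictionary and then invoke, on one side, the continuity of mixed covolumes and, on the other, the reversed Alexandrov--Fenchel inequality for mixed Monge--Amp\`ere masses that this paper establishes. Part (b) should be the more immediate of the two. Once the higher Lelong numbers are interpreted as mixed masses, the reversed Alexandrov--Fenchel inequality yields a bound of the shape $\ell_1(\vp,0)^n \le c_n\,\ell_n(\vp,0)$ with $c_n$ a dimensional constant; hence $\ell_n(\vp,0)=0$ forces $\ell_1(\vp,0)=0$, which is exactly (b). (If the reversed inequality is instead phrased as convexity of the sequence $k \mapsto \ell_k(\vp,0)$, the same conclusion follows by comparing successive ratios.) This argument is insensitive to the toric hypothesis, matching the fact that the reversed inequality is available for general psh functions.

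For part (a) I would restrict to toric $\vp\in\EE$ and argue in four steps. First, set up the dictionary: writing $\vp(z)=g(\log\abs{z_1},\dots,\log\abs{z_n})$ with $g$ convex and nonincreasing, attach to $\vp$ the convex region $\Gamma_\vp\subset\RR^n_{\ge 0}$ recording the asymptotic slopes of $g$ as its argument tends to $-\infty$, with associated coconvex body $A_\vp=\RR^n_{\ge 0}\setminus\Gamma_\vp$. Second, express each higher Lelong number as a mixed covolume, $\ell_k(\vp,0)=\Covol(A_\vp[k],\Delta[n-k])$, where $\Delta$ is the coconvex body of $\log\abs{z}$; this is the step that turns the analytic intersection numbers into volumes. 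Third, identify the Demailly approximants on the convex side: since the weight $e^{-2m\vp}$ is toric, an orthonormal basis of the associated Bergman space consists of normalized monomials $z^\alpha$, and $\int\abss{z^\alpha}\,e^{-2m\vp}<\infty$ selects exactly those $\alpha$ with $\tfrac1m(\alpha+\mathbf 1)\in\Gamma_\vp$, so that $\vp_m$ corresponds to the rescaled integer hull of $\Gamma_\vp$ and $A_{\vp_m}\to A_\vp$. Fourth, conclude by continuity of mixed covolumes, $\Covol(A_{\vp_m}[k],\Delta[n-k])\to\Covol(A_\vp[k],\Delta[n-k])$.

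I expect the third step to be the main obstacle, for two reasons. First, the $+\mathbf 1$ shift produced by the Lebesgue volume form in logarithmic coordinates creates an $O(1/m)$ discrepancy between $\Gamma_\vp$ and the rescaled lattice data of $\vp_m$, and one must check that this discrepancy disappears in the limit in all mixed terms simultaneously and uniformly in $k$. Second, for $\vp\in\EE$ the singularity need not be isolated, so $A_\vp$ may be unbounded (though of finite volume, since $\ell_n(\vp,0)<\infty$); this forces one to work with a notion of mixed covolume for unbounded coconvex sets and to verify that its continuity persists in the non-compact regime. Once $A_{\vp_m}\to A_\vp$ is secured in the appropriate sense, continuity of mixed volumes is routine and closes (a), while the reversed Alexandrov--Fenchel inequality closes (b).
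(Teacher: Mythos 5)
Your overall route for part (a) --- toric dictionary, higher Lelong numbers as mixed covolumes of the indicator diagram, the multiplier ideals $\JJ(m\vp)$ as monomial ideals whose rescaled Newton polyhedra shrink to $\Gamma_\vp$, then continuity of covolumes --- is exactly the paper's proof of Theorem~\ref{dem toric} (which, like your proposal, treats only the toric case of (a); the general conjecture remains open). Your treatment of (b) also matches in substance: the reversed Alexandrov--Fenchel inequality of Theorem~\ref{af}, specialized to $\vp_2=\log|z|$, gives log-convexity of $k\mapsto\ell_k(\vp,0)$ with $\ell_0=1$, hence $\ell_1(\vp,0)^n\le\ell_n(\vp,0)$ (no dimensional constant is needed); this is \cite[Corollary 2.2]{DH}, whose proof the paper completes via Theorem~\ref{af}, and it settles (b) for the Cegrell class.

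You have, however, misplaced the main difficulty. Your third step is essentially a citation: that $\JJ(m\vp)$ is monomial with $z^\alpha\in\JJ(m\vp)$ iff $\alpha+\mathbf 1\in \interior(m\Gamma_\vp)$ is Lemma~\ref{multi} (\cite[Theorem A]{G}, \cite[Proposition 3.1]{R13b}), and the $+\mathbf 1$ shift is harmless because $\Gamma_\vp=\bigcap_m \frac1m N(\JJ(m\vp))$ is a decreasing intersection of finite-covolume regions, so the covolumes converge. The real work is in your second step, which you present as part of the dictionary: the identity $\ell_k(\vp,0)=n!\Covol_k(\Gamma_\vp)$ was previously known only for isolated (hence cobounded) singularities, and for a general toric $\vp\in\EE$ it amounts to proving that the residual Monge--Amp\`ere mass of $\vp$ at $0$ equals that of its indicator $\Psi_\vp$. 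One inequality follows from $\vp\le\Psi_\vp+O(1)$ and the comparison theorem in the class $\EE$ \cite[Lemma 4.1]{ACCP}, but the converse needs an argument: the paper uses the dilations $\vp_m(z)=\frac1m\vp(|z_1|^m,\ldots,|z_n|^m)$, which increase to the usc regularization of $\Psi_\vp$, preserve the residual mass at $0$ (a ramified-covering computation, Lemma~\ref{vpmm}), and then Cegrell's continuity of the Monge--Amp\`ere operator along increasing sequences. Without this, your scheme does not get off the ground, since nothing a priori prevents $\vp$ from carrying more mass at $0$ than its asymptotic-slope data records. Your worry about unbounded coconvex bodies is legitimate but secondary; it is handled by exhausting $\Gamma_\vp$ by the cobounded diagrams of $\max\{\Psi_\vp,\,N\log|z_1|,\ldots,N\log|z_n|\}$.
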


It is easy to see that (a) implies (b). The singularity of the Demailly approximation function $\vp_m$ of a psh function $\vp$ is given by the multiplier ideal $\JJ(m \vp)$. Thus $\ell_k (\vp_m, 0)$ is equal to   $m^{-k}$ times the $k$-th mixed multiplicity (generalizing the Hilbert-Samuel multiplicity) of $\JJ(m \vp)$, and the validity of this conjecture will provide algebro-geometric contents of the higher Lelong numbers.  See e.g. \cite[p.107]{R16} and the references therein (including \cite{BFJ08}) for previous results on Conjecture~\ref{dem}.

 A psh function $\vp(z_1, \ldots, z_n)$ is called \emph{toric} (or \emph{multi-circled}) if the value depends only on $\abs{z_1}, \ldots, \abs{z_n}$, i.e. $\vp(z_1, \ldots, z_n) = \vp (\abs{z_1}, \ldots, \abs{z_n})$. (See e.g. \cite{G} and \cite{R13b} for more on toric psh functions.) Toric psh functions provide many useful examples in the study of psh singularities. Also some key arguments of important related results (such as in \cite{DH}, \cite{M02}) are made by reducing to the toric case. In this article, we first have the following result.

 \begin{theorem}\label{dem toric}

 Conjecture~\ref{dem} is true if $\vp$ is toric.

 \end{theorem}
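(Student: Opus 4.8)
The plan is to pass to convex geometry through the standard dictionary for toric functions, and to reduce the asserted convergence of Lelong numbers to a convergence of Newton bodies together with the continuity of mixed covolumes. First I would attach to the toric function $\vp\in\EE$ its Newton convex body: writing $g(t)=\vp(e^{t_1},\dots,e^{t_n})$ for $t\in\RR^n_{-}$, the function $g$ is convex and nondecreasing in each variable, and its asymptotic (indicator) behaviour at $-\infty$ is encoded by an upward-closed convex set $\Gm_\vp\subseteq\RR^n_{+}$. Let $\Sigma=\{a\in\RR^n_{+}:\sum_i a_i\ge 1\}$ be the analogous body of $\log\abs{z}$. The key input, supplied by the convex-geometric computation of higher Lelong numbers of toric functions, is that each $\ell_k(\vp,0)$ is a mixed covolume,
$$\ell_k(\vp,0)=n!\,\Covol(\Gm_\vp[k],\Sigma[n-k]),\qquad 1\le k\le n,$$
and likewise for $\vp_m$ with $\Gm_\vp$ replaced by $\Gm_{\vp_m}$. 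This turns both sides of part (a) into covolumes of explicit convex bodies.

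Second, I would identify $\Gm_{\vp_m}$. Since $\vp$ is toric, the multiplier ideal $\JJ(m\vp)$ is a monomial ideal, and the toric computation of multiplier ideals (Howald's formula) describes its Newton polyhedron $P_m$ by the interior condition $v+\mathbf{1}\in\interior(m\,\Gm_\vp)$. Concretely this gives the sandwich
$$m\,\Gm_\vp\ \subseteq\ P_m\ \subseteq\ (m\,\Gm_\vp-\mathbf{1})\cap\RR^n_{+},\qquad \mathbf{1}=(1,\dots,1).$$
Since $\Gm_{\vp_m}=\tfrac1m P_m$, dividing by $m$ yields
$$\Gm_\vp\ \subseteq\ \Gm_{\vp_m}\ \subseteq\ \bigl(\Gm_\vp-\tfrac1m\mathbf{1}\bigr)\cap\RR^n_{+},$$
so the Newton bodies $\Gm_{\vp_m}$ converge to $\Gm_\vp$ as $m\to\infty$, the shift $\tfrac1m\mathbf{1}$ becoming negligible.

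Third, I would conclude by continuity of the mixed covolume under this convergence: since $\Covol$ is monotone and multilinear in its arguments, the sandwich forces
$$\Covol(\Gm_{\vp_m}[k],\Sigma[n-k])\longrightarrow\Covol(\Gm_\vp[k],\Sigma[n-k]),$$
that is $\ell_k(\vp_m,0)\to\ell_k(\vp,0)$ for all $2\le k\le n$, which is part (a). As noted above, (a) implies (b), so both halves of Conjecture~\ref{dem} follow in the toric case.

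The hard part is the convex-geometric bookkeeping in the Cegrell class, where $\vp$ need not have an isolated, or even analytic, singularity. Then $\Gm_\vp$ may be unbounded (touching the coordinate faces of $\RR^n_{+}$), so that it is the covolume $\Covol(\Gm_\vp)=\Vol(\RR^n_{+}\setminus\Gm_\vp)$, rather than an ordinary volume, that is finite; one must check that $\JJ(m\vp)$ is still monomial with the stated Newton polyhedron in this generality, and that the mixed covolume is genuinely continuous under the sandwich above. The latter amounts to showing that the covolume of the thin shell between $\Gm_\vp$ and its translate $\Gm_\vp-\tfrac1m\mathbf{1}$ tends to $0$, uniformly enough to control the mixed (and not merely the pure) covolumes, which is exactly where the reversed Alexandrov--Fenchel machinery for mixed covolumes developed in the paper is needed.
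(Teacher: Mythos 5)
Your overall strategy coincides with the paper's: translate higher Lelong numbers of the toric function and of its Demailly approximants into mixed covolumes of the indicator diagram $\Gamma_\vp$ and of the scaled Newton polyhedra $\frac1m N(\JJ(m\vp))$, use Howald's formula (Lemma~\ref{multi}) to see that the latter shrink onto the former, and conclude by convergence of covolumes. However, there is a genuine gap at the very first step. The identity $\ell_k(\vp,0)=n!\,\Covol_k(\Gamma_\vp)$, which you invoke as a ``key input supplied by the convex-geometric computation,'' is only known in the literature for toric functions with \emph{isolated} singularity (\cite[Corollary 3.3]{R13b}, where $\Gamma_\vp$ is cobounded). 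Extending it to arbitrary toric functions in the Cegrell class $\EE$ is precisely the main technical content of the paper (Theorem~\ref{r13}), and the crux there is the analytic statement that the residual Monge--Amp\`ere mass of $\vp$ at $0$ equals that of its indicator, $\ell_n(\vp,0)=\ell_n(\Psi_\vp,0)$. This is proved via the dilation family $\vp_m(z)=\frac1m\vp(|z_1|^m,\ldots,|z_n|^m)$ increasing to (the u.s.c.\ regularization of) $\Psi_\vp$, Cegrell's continuity of the Monge--Amp\`ere operator along increasing sequences, the invariance of the residual mass under the power substitution (Lemma~\ref{vpmm}, itself requiring a regularization and change-of-variables argument), and the comparison theorem of \AA hag--Cegrell--Czyz--Pham for one inequality. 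None of this appears in your proposal, and without it the identification of $\ell_k(\vp,0)$ with a covolume is unproven in the generality needed.

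A second, smaller point: your closing claim that the continuity of mixed covolumes under the sandwich ``is exactly where the reversed Alexandrov--Fenchel machinery is needed'' is not how the paper proceeds, and it is not needed. Once the polarization identity \eqref{polari} reduces mixed covolumes to linear combinations of pure covolumes of Minkowski sums, the convergence $\frac{1}{m^n}\Covol_k(N(\JJ(m\vp)))\to\Covol_k(\Gamma_\vp)$ follows from monotone convergence of volumes of the nested complements $\RR^n_+\setminus\frac1m N(\JJ(m\vp))$ increasing to $\RR^n_+\setminus\Gamma_\vp$ (up to a null set); the reversed Alexandrov--Fenchel inequality (Theorem~\ref{af}) is an independent application of the paper, not an ingredient of this proof. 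Also note that, as in Remark~\ref{rr2}, the reduction of the mixed case to the pure case by polarization works here at the level of covolumes of convex regions, not at the level of Demailly approximations of sums of psh functions, so the polarization must be applied on the convex-geometric side as the paper does.
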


 A special case of Theorem~\ref{dem toric} when $\vp$ is further assumed (in addition to being in the Cegrell class $\EE$) to be locally bounded outside $0$ was first proved in \cite{R13a} using the notion of asymptotically analytic singularity. In general, a toric psh function in the Cegrell class need not be asymptotically analytic \cite[Example 3.9]{R13a}   unless it is locally bounded outside $0$.
 
   To our knowledge, in the generality of Cegrell psh functions (i.e. outside the class of psh functions with isolated singularities), Theorem~\ref{dem toric} is the first general result toward Conjecture~\ref{dem}. Note that the singularities of a psh function in the Cegrell classes are typically even more complicated than the case of a psh function with isolated singularities (see Example~\ref{limit}). 

\begin{remark}\label{rr0}
 
  Both Conjecture~\ref{dem} and Theorem~\ref{dem toric} can be generalized to a version for general mixed Monge-Amp\`ere masses:  $m(\vp_{1,m}, \ldots, \vp_{n,m}) \to m(\vp_1, \ldots, \vp_n)$ where $\{ \vp_{j,m} \}_{m \ge 1}$ is the Demailly approximation sequence of $\vp_{j}$ where every $\vp_{j} \; (j = 1, \ldots, n)$ is toric and in the Cegrell class $\EE$. In this paper, we will concentrate on the original version of the problem, but see Remarks~\ref{rr1} and \ref{rr2}. 
 
\end{remark}

 The proof of Theorem~\ref{dem toric} is entirely different from \cite{R13a}:  it uses the multiplier ideals and works for general toric $\vp$ in the Cegrell class $\EE$. In the proof, we consider relations among mixed multiplicities $e(I_1, \ldots, I_n)$ of $\mfm$-primary ideals, mixed covolumes $\Covol(\Gamma_1, \ldots, \Gamma_n)$ of convex regions in $\RR_+^n$, and mixed Monge-Amp\`ere masses at the origin $m(\vp_1, \ldots, \vp_n)$ of Cegrell psh functions  (see Section 2).   The basic theories of these three kinds of objects are parallel to each other and all analogous to the classical theory of mixed volumes of convex bodies: their definitions given by the homogeneous polynomial theorem, the polarization identity, Brunn-Minkowski inequalities and so on (see \cite[Chap.4]{BZ}, \cite{G02} and Section 2).

 This sheds light on the following natural question: what are the relations between this convex geometry and the theory of higher Lelong numbers (or more generally, mixed Monge-Amp\`ere masses)? This question is motivated by one important instance of such relation given by Demailly and Pham~\cite[Lemma 2.1]{DH}, which implied a proof of the $1 < k < n$ version of Conjecture~\ref{dem} (b). Indeed, such a question was raised explicitly e.g. by Florin Ambro at the end of an inspiring lecture of Jean-Pierre Demailly on \cite{DH}, \cite{D12} at LMS Durham Symposium in July 2012.

  In the second part of this article, we answer the question by showing that the  mixed Monge-Amp\`ere masses of Cegrell psh functions generalize, in a specific sense, both mixed covolumes and mixed multiplicities. As a consequence, we give a unified analytic proof of the reversed Alexandrov-Fenchel inequalities for all three of them. 

 The Alexandrov-Fenchel inequality is a significant generalization of the Brunn-Minkowski inequality both in the classical setting of convex bodies $\&$ mixed volumes and in the current setting of `coconvex' bodies $\&$ mixed covolumes as in Khovanskii-Timorin~\cite{KT14} and Kaveh-Khovanskii~\cite{KK13}. According to \cite[p.811]{KT14}, the reversed versions of Brunn-Minkowski, First Minkowski, Second Minkowski inequalities can be derived from the reversed Alexandrov-Fenchel inequality as in the corollary below of the following theorem.

 \begin{theorem}[Alexandrov-Fenchel inequality for mixed Monge-Amp\`ere mass]\label{af}

 Let $\Omega \subset \CC^n$ be a bounded hyperconvex domain for $n \ge 2$. Let $p \in \Omega$ be a point. Let $\vp_1, \ldots, \vp_n$ be psh functions in the Cegrell class ${\EE}= \EE(\Omega)$. Then we have the inequality
  $$ m(\vp_1, \vp_1, \vp_3, \ldots, \vp_n) \, m(\vp_2, \vp_2, \vp_3, \ldots, \vp_n) \ge m(\vp_1, \vp_2, \vp_3, \ldots, \vp_n)^2    $$

\noi where $m(\vp_1, \vp_2, \ldots, \vp_n)$ is the mass of $(dd^c \vp_1) \wedge \ldots \wedge (dd^c \vp_n)$ carried by $p$.

 \end{theorem}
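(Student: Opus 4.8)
The plan is to prove this "reversed" Alexandrov–Fenchel inequality for mixed Monge–Ampère masses at a point by reducing it, via the approximation philosophy running through the paper, to a statement about convex geometry where the analogous inequality is already known. The key conceptual point is the direction of the inequality: for mixed Monge–Ampère masses carried by a point the inequality goes the \emph{opposite} way from the classical Alexandrov–Fenchel inequality, reflecting the fact that these masses behave like \emph{complemented} (coconvex) mixed volumes rather than ordinary mixed volumes. So the target is a reversed/complemented Alexandrov–Fenchel inequality, and the natural strategy is to transfer the known reversed Alexandrov–Fenchel inequality for mixed covolumes of coconvex regions (the Khovanskii–Timorin / Kaveh–Khovanskii results cited in the introduction) to the analytic side.

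First I would reduce to the toric case. The mixed Monge–Ampère mass $m(\vp_1,\ldots,\vp_n)$ is symmetric and multilinear (via the polarization identity) in its arguments, and it is monotone/continuous along the relevant approximating sequences; by replacing each $\vp_j$ with its toric regularization (the "greatest toric minorant" obtained by averaging over the torus action), one does not increase the masses in the required way, so it suffices to establish the inequality when all $\vp_j$ are toric psh functions in the Cegrell class $\EE(\Omega)$. This is exactly the setting where the dictionary between the three worlds — mixed multiplicities $e(I_1,\ldots,I_n)$, mixed covolumes $\Covol(\Gm_1,\ldots,\Gm_n)$, and mixed Monge–Ampère masses $m(\vp_1,\ldots,\vp_n)$ — becomes an equality of quantities rather than a mere analogy.

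The central step is to identify, for a toric psh function $\vp_j$ in $\EE$, its associated convex region $\Gm_j \subset \RR_+^n$ (built from the Newton-type data of $\vp_j$, i.e. the indicator/support of the function $\vp_j(\log|z_1|,\ldots,\log|z_n|)$ under the exponential correspondence), and to prove the identification
\begin{equation*}
m(\vp_1,\ldots,\vp_n) \;=\; n!\,\Covol(\Gm_1,\ldots,\Gm_n).
\end{equation*}
For a single function this is the toric computation of the Monge–Ampère mass as $n!$ times the covolume of the associated coconvex region; the mixed statement then follows by polarization, since both sides are symmetric multilinear (in the Minkowski/covolume sense) forms agreeing on the diagonal. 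Once this identity is in place, the theorem reduces verbatim to the reversed Alexandrov–Fenchel inequality for mixed covolumes,
\begin{equation*}
\Covol(\Gm_1,\Gm_1,\Gm_3,\ldots,\Gm_n)\,\Covol(\Gm_2,\Gm_2,\Gm_3,\ldots,\Gm_n)\;\ge\;\Covol(\Gm_1,\Gm_2,\Gm_3,\ldots,\Gm_n)^2,
\end{equation*}
which is the known theorem of Khovanskii–Timorin and Kaveh–Khovanskii, and the statement is proved in the toric case; the general case follows from the first reduction.

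I expect the main obstacle to be the toric identification step and, in particular, making the correspondence $\vp_j \mapsto \Gm_j$ precise and well-behaved for \emph{arbitrary} functions in the Cegrell class $\EE$, rather than only for the locally bounded or isolated-singularity case. The difficulty is twofold: first, one must show the associated region $\Gm_j$ is a genuine coconvex set with finite covolume exactly when $\vp_j\in\EE$ has finite Monge–Ampère mass, handling the unbounded and highly singular profiles that $\EE$ allows (cf.\ the warning in the text that such functions need not be asymptotically analytic); second, one must verify that the mass carried \emph{at the point} $p$ — as opposed to the total mass on $\Omega$ — is the quantity matching the covolume, which requires care about what happens at the boundary of the region and localizing the computation near the vertex. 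Establishing the mixed identity $m = n!\,\Covol$ in this generality, with full control of continuity and of the contribution concentrated at $p$, is where the real analytic work lies; the passage to the inequality afterward is then purely formal, being an application of an already-established convex-geometric theorem.
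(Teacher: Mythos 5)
There is a genuine gap, and it is the very first step: the reduction to the toric case does not work. You propose to replace each $\vp_j$ by a toric regularization (equivalently, to pass to indicators $\Psi_{\vp_j}$ and their diagrams $\Gm_{\vp_j}$) and claim the masses behave ``in the required way.'' They do not. Since $\vp_j\le\Psi_{\vp_j}+O(1)$, the comparison theorem for Cegrell-class functions gives the one-sided bound $m(\vp_{i_1},\ldots,\vp_{i_n})\ge m(\Psi_{\vp_{i_1}},\ldots,\Psi_{\vp_{i_n}})$ for \emph{every} choice of arguments, and there is no reverse control: already $m(\log|f_1|,\ldots,\log|f_n|)$ is the intersection multiplicity of $f_1,\ldots,f_n$ at $p$, which exceeds the mixed covolume of the Newton polyhedra except for generic coefficients. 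Writing $m_{ab}=m(\vp_a,\vp_b,\vp_3,\ldots,\vp_n)$ and $\tilde m_{ab}$ for the toric replacements, the toric inequality yields $m_{11}m_{22}\ge\tilde m_{11}\tilde m_{22}\ge\tilde m_{12}^2$, but the bound you have on the cross term is $m_{12}\ge\tilde m_{12}$, which points in the wrong direction to conclude $m_{11}m_{22}\ge m_{12}^2$. So the general case simply does not follow from the toric case by symmetrization. A secondary problem is that even in the toric case you invoke Khovanskii--Timorin and Kaveh--Khovanskii, whose reversed Alexandrov--Fenchel inequality is proved only for \emph{cobounded} coconvex regions, whereas Cegrell-class toric functions produce merely \emph{cofinite} indicator diagrams; the cofinite case is precisely one of the new convex-geometric statements this theorem is meant to \emph{produce} (Corollary~\ref{AFM}), so your argument also runs the logic of the paper backwards and borders on circularity.

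The paper's proof is direct, analytic, and uses no convex geometry at all. One localizes: membership in $\EE$ is a local property, so after shrinking $\Omega$ to a ball $B$ around $p$ one may assume $\vp_1,\ldots,\vp_n\in\FF(B)$, and then approximates each $\vp_j$ by a sequence $\vp_{j,i}\in\EE_0$. The key tool is Cegrell's Cauchy--Schwarz-type inequality \cite[Lemma 5.4]{C04} applied with $T=-h\,(dd^c\vp_{3,i})\wedge\cdots\wedge(dd^c\vp_{n,i})$ and a weight $h\in\EE_0(B)$, which gives
$$\left(\int_B -h\, dd^c\vp_{1,i}\wedge dd^c\vp_{2,i}\wedge T'\right)^2\le \left(\int_B -h\,(dd^c\vp_{1,i})^2\wedge T'\right)\left(\int_B -h\,(dd^c\vp_{2,i})^2\wedge T'\right)$$
with $T'=dd^c\vp_{3,i}\wedge\cdots\wedge dd^c\vp_{n,i}$; passing to the limit in $i$ via \cite[Prop.~5.1]{C04} and then taking $h=h_r=\max(\frac1r\log|z-p|,-1)$ with $r\to\infty$, so that $-h_r$ increases to the characteristic function of $\{p\}$, extracts the point masses and yields the inequality. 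If you want a salvageable version of your idea, it goes in the opposite direction: the analytic theorem plus the identification $m=n!\,\Covol$ for toric Cegrell functions (Theorem~\ref{r13 general} and Lemma~\ref{indic}) \emph{proves} the cofinite convex-geometric inequality, rather than being deduced from it.
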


\noindent This generalizes the important result \cite[Lemma 2.1]{DH}. Now for mixed covolumes, we obtain

 \begin{corollary}[Alexandrov-Fenchel inequality for mixed covolumes]\label{AFM}
\qa 

  Let $n \ge 2$. Let $\; C:= \overline{\RR^n_{+}}$ be  the closed convex cone given as the closure of $\RR^n_{+} := \{ (x_1, \ldots, x_n) : x_i > 0 ,\forall i \}$.   
  Let $\Gm_1, \ldots, \Gm_n \subset C$ be cofinite $C$-convex regions. We have the inequality $$ \Covol(\Gamma_1, \Gamma_1, \Gamma_3, \ldots, \Gamma_n) \, \Covol(\Gamma_2,\Gm_2, \Gm_3, \ldots, \Gamma_n)  \ge \Covol(\Gamma_1, \Gm_2, \Gm_3,  \ldots, \Gamma_n)^2  .$$

 \end{corollary}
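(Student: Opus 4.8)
The plan is to deduce Corollary~\ref{AFM} from Theorem~\ref{af} by realizing mixed covolumes as mixed Monge-Amp\`ere masses of toric psh functions, i.e.\ via the dictionary between convex geometry and toric psh singularities developed in Section~2. Concretely, I would fix a convenient bounded hyperconvex domain, say the unit polydisk $\Om=\Delta^n\subset\CC^n$ with $p$ the origin, and to each cofinite $C$-convex region $\Gm_j$ attach a \emph{toric} psh function $\vp_j$ via the logarithmic/Legendre-type correspondence $t=(\log|z_1|,\ldots,\log|z_n|)\mapsto \vp_j$, whose associated convex region is exactly $\Gm_j$. The cofiniteness hypothesis is precisely what guarantees that $\vp_j$ lies in the Cegrell class $\EE(\Om)$ and that its Monge-Amp\`ere mass at the origin is finite; since the mass is a purely local quantity at $p$, the particular choice of hyperconvex $\Om$ is immaterial.

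The technical heart, which I expect to be supplied by Section~2, is the identity asserting that under this correspondence the mixed Monge-Amp\`ere mass at $p$ computes the mixed covolume up to a single normalizing constant: there is a universal constant $c_n>0$ (one expects $c_n=n!$) such that for every choice of indices $i_1,\ldots,i_n\in\{1,\ldots,n\}$ one has
$$\Covol(\Gm_{i_1},\ldots,\Gm_{i_n}) = c_n\, m(\vp_{i_1},\ldots,\vp_{i_n}).$$
Establishing this requires checking that the correspondence $\Gm\mapsto\vp$ is symmetric and additive in a way compatible with the polarizations defining both mixed covolumes and mixed masses: polarizing the covolume on the convex side must match polarizing the Monge-Amp\`ere mass on the analytic side. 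I would verify the pure (non-mixed) case first, through the toric Monge-Amp\`ere computation of $(dd^c\vp)^n(\{0\})$ against the volume of the coconvex body $C\setminus\Gm$, and then extend to all mixed terms using the homogeneous-polynomial/polarization formalism recalled in Section~2.

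With the identity in hand the conclusion is immediate. Applying Theorem~\ref{af} to the toric functions $\vp_1,\ldots,\vp_n\in\EE(\Om)$ yields
$$m(\vp_1,\vp_1,\vp_3,\ldots,\vp_n)\,m(\vp_2,\vp_2,\vp_3,\ldots,\vp_n)\ge m(\vp_1,\vp_2,\vp_3,\ldots,\vp_n)^2,$$
and substituting $\Covol=c_n\,m$ multiplies both sides by the same factor $c_n^2$, which cancels to give exactly the claimed reversed Alexandrov-Fenchel inequality for mixed covolumes.

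The main obstacle is the realization step itself, not the final application. The subtlety is that an arbitrary cofinite $C$-convex region need not arise from a function that is locally bounded away from $p$, so the toric psh function attached to $\Gm_j$ may have genuinely non-isolated singularities; one must therefore work within the full Cegrell class rather than with tame indicators, confirm that the mixed-mass operator is well defined there, and prove the exact equality $\Covol=c_n\,m$ in this larger generality rather than only for the locally bounded model case.
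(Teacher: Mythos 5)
Your proposal is correct and follows essentially the same route as the paper: the realization step you identify as the technical heart is exactly the paper's Lemma~\ref{indic} (which produces a toric indicator in the Cegrell class $\FF(\DD^n)$ with prescribed cofinite indicator diagram), the identity between mixed masses and mixed covolumes is Theorem~\ref{r13 general}, and the conclusion then follows by applying Theorem~\ref{af}. The only slip is the direction of the normalizing constant --- the paper has $m(\vp_1,\ldots,\vp_n)=n!\,\Covol(\Gamma_{\vp_1},\ldots,\Gamma_{\vp_n})$, so your $c_n$ should be $1/n!$ rather than $n!$ --- which is immaterial since it cancels in the inequality.
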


 We remark that Corollary~\ref{AFM} also recovers Alexandrov-Fenchel inequality for mixed multiplicities (see \cite{T77}, \cite{RS}) as in  \cite[Theorem 1]{KK13} in the nonsingular case, i.e. when the local ring is that of a nonsingular point of a complex variety.

 Note that Corollary~\ref{AFM} generalizes the Alexandrov-Fenchel inequality for mixed covolumes given in the case when $\Gm_1, \ldots, \Gm_n$ are furthermore \emph{cobounded} (see Section 2.3 for cobounded and cofinite convex regions) by  \cite{KT14} and \cite[Theorem 10.5]{KK13}. We are naturally led to consider the more general cofinite case in convex geometry from considering Cegrell psh functions in complex analysis via their indicator diagrams (see Theorem~\ref{r13} and Lemma~\ref{indic}).

  We remark that \cite{KT14} proved the cobounded case of Corollary~\ref{AFM} from the classical \emph{unreversed} Alexandrov-Fenchel inequality for convex bodies. On the other hand, \cite{KK13} proved it using the reversed Alexandrov-Fenchel inequality for mixed multiplicities  \cite[Theorem 9.1]{KK13}, which in turn relies on the classical Hodge index theorem. See \cite[Remark 1.6.8]{L} for an insightful remark on how one can view both of these two directions (reversed and unreversed) of inequalities as coming from the Hodge index theorem.

    It would be also interesting to know whether one can get an alternative proof of the unreversed Alexandrov-Fenchel inequality for convex bodies starting from the reversed one. According to V. Timorin's answer to our question, such an argument is currently not known in dimensions greater than $2$.


 After the first version of this paper was posted on arXiv, we learned that recently published papers of Rolf Schneider~\cite{S18} and Milman and Rotem~\cite{MR14} in convex geometry (in addition to \cite{KT14}, \cite{KK13})  also studied the reversed (or \emph{complemented}) version of the Brunn-Minkowski inequality and related topics. In particular, \cite[Theorem 1]{S18} considers the more general cofinite case (together with the equality condition) discussed above, similarly to this paper.

  We note that this paper is giving the more general (than Brunn-Minkowski) Alexandrov-Fenchel inequality in the cofinite case whereas our complex analytic proof is considerably short.  While our original interest comes more from complex analysis and algebraic geometry, it will be certainly interesting also to pursue more of these relations to convex geometry. 

This article is organized as follows. In Sections 2 and 3, we review the necessary background on convex geometry and on psh functions, respectively. In Section 4, we give the proof of Theorem~\ref{dem toric} on the convergence of higher Lelong numbers under Demailly approximation for toric psh functions in the Cegrell class. In Section 5, we give applications to convex geometry of mixed covolumes. In particular, a unified analytic proof is given to the reversed Alexandrov-Fenchel inequalities.

\medskip

\noi \textbf{Acknowledgements.} We would like to thank Jean-Pierre Demailly and S\'ebastien Boucksom for interesting discussions regarding Conjecture~\ref{dem}, and Kiumars Kaveh, Askold Khovanskii and Vladlen Timorin for answering our questions on their papers. We thank the anonymous referees for helpful comments.  This work began when A.R. visited Seoul National University and D.K. visited University of Stavanger and \'Ecole Polytechnique. We thank these institutions for hospitality and also SRC-GAIA {\small (Center for Geometry and its Applications, based at POSTECH, Korea)}  for its financial support for these visits through the  National Research Foundation of Korea grant No.2011-0030795.

\section{Preliminaries on convex geometry}

 In this section, for the reader's convenience, we briefly recall the background of the fundamental notions of mixed volumes, mixed multiplicities and mixed covolumes which are developed analogously to each other. This analogy will also be shared by mixed Monge-Amp\`ere mass in the next section. The material in this section is mostly classical (except recent developments on mixed covolumes) and the paper \cite{KK} has excellent exposition, which we follow.

 \subsection{Mixed volumes}
 We briefly summarize from  \cite[Sec.2]{KK} to which we refer for more details. See also \cite[Chap.4]{BZ}, \cite[Sec.2.3]{H94}, \cite{KK12}, \cite{S13}.
 Consider the cone of all convex bodies in $\RR^n$ under the Minkowski sum of two convex bodies : $\Delta_1 + \Delta_2 := \{ x + y : x \in \Delta_1, y \in \Delta_2 \}$.   For convex bodies $\Delta_1, \ldots, \Delta_n$, the volume (with respect to the standard Euclidean metric in $\RR^n$) of the convex body $\ld_1 \Delta_1 + \ldots + \ld_1 \Delta_n$ is a homogeneous polynomial in $\ld_1, \ldots, \ld_n$ and the coefficient of the term $\ld_1 \ldots \ld_n$ divided by $n!$ is called the \emph{mixed volume} $V(\Delta_1, \ldots, \Delta_n)$. It can also be regarded as the \emph{polarization} of the volume homogeneous polynomial in the sense that \cite[p.272]{KK} the mixed volume $V(\Delta_1, \ldots, \Delta_n)$ is the unique multilinear symmetric function on the cone of convex bodies such that $V(\Delta, \ldots, \Delta) = \Vol (\Delta)$. It also satisfies the following polarization identity (see e.g. \cite[p.137]{BZ}) for $\mu = V$, thus expressing the mixed volume $V$ in terms of the usual volumes $\Vol$'s :

\begin{equation}\label{polari}
  \mu (s_1, \ldots, s_n) = \frac{(-1)^n}{n!} \sum^n_{j=1} \sum_{i_1< \ldots <i_j} \mu \left(   \sum^j_{k=1}  s_{i_k}, \ldots,  \sum^j_{k=1}  s_{i_k}   \right).
\end{equation}

 The Alexandrov-Fenchel inequality for convex bodies is a significant generalization of the Brunn-Minkowski inequality for convex bodies. See \cite{G02}, \cite{Gr90} for nice surveys of these inequalities. See also \cite{Be17}, \cite{W17} for some other recent connections to complex geometry based on H\"ormander $L^2$ estimates. 

\subsection{Mixed multiplicities}

  Let $X$ be an affine complex variety and fix a point $x \in X$ with the maximal ideal $\mfm := \mfm_x$. Let $\mfa_1, \ldots, \mfa_n$ be ideal sheaves in $\OO_X$ such that they are $\mfm$-primary, i.e.  the zero set of each $\mfa_k$ is $\{ x \}$.  (For our purpose of defining multiplicities at $x$, we could instead let $\mfa_1, \ldots, \mfa_n$ be $\mfm$-primary ideals in the local ring $\OO_{X, x}$.)

  The mixed multiplicity $e(\mfa_1, \ldots, \mfa_n)$ is equal to the usual intersection multiplicity at $x$ of the generic hypersurfaces $H_k = \{ y \in X : f_k (y) = 0  \}  \; (1 \le k \le n)$ (i.e. $f_k$ is a generic element of $\mfa_k$ ). It can also be regarded as the polarization of the classical Hilbert-Samuel multiplicity (see e.g. \cite[(2.4.31)]{L}).

 See \cite[1.6.B]{L}, \cite[p.271]{KK}, \cite{KK13} (and the references therein) and the original references \cite{T73}, \cite{RS} for more on mixed multiplicites.

\subsection{Mixed covolumes}

 We will follow \cite{KK}, \cite{R13b}. Let $C$ be the closure $\overline{\RR^n_{+}}$ (of $\RR^n_{+}$) as a closed convex cone.  A closed and convex subset $\Gm \subset C$ is called a \emph{$C$-convex region} if $x \in \Gm, y \in C$ implies $x + y \in \Gm$. We call a $C$-convex region $\Gm$ \emph{cofinite} (resp. \emph{cobounded}) if the complement $C \setminus \Gm$ has finite volume (resp. is bounded). We call the volume of $C \setminus \Gm$ the \emph{covolume} of $\Gm$ and denote it by $\Covol(\Gm)$. Analogously to the mixed volume case, we define the \emph{mixed covolume} $ \Covol(\Gamma_1, \Gm_2, \Gm_3,  \ldots, \Gamma_n)$ of $\Gamma_1, \ldots, \Gamma_n$ to be given by the polarization of the covolumes. When $\Delta = \{ x \in \RR^n_{+} : x_1 + \ldots + x_n \ge 1 \} $, we define the $k$-th covolume $\Covol_k (\Gamma) := \Covol(\Gamma, \ldots, \Gamma, \Delta, \ldots, \Delta)$ where there are $k$ copies of $\Gamma$ and $n-k$ copies of $\Delta$ \cite[p.1983]{R13b}.

\begin{remark}

As in \cite{KK}, more generally than $C=\overline{\RR^n_{+}}$, one can let $C$ be a dimension $n$ closed strictly convex cone with apex at the origin $0$ in the Euclidean space $\RR^n$. In this paper, we will assume $C = \overline{\RR^n_{+}}$ since this is the representative case which suffices for our purpose in this paper.  For more general $C$, one may use approximation by those $C$ being rational polyhedral cones and then use the associated affine toric varieties as in \cite[Proof of Theorem 10.4]{KK13} toward results in Section 5 (cf. \cite[p.218]{T04}).

\end{remark}


\section{ Plurisubharmonic functions : Cegrell and toric }\label{cobdd}

 In this section, we collect and summarize the background knowledge on the plurisubharmonic functions used in this article: those in the Cegrell classes on one hand, and toric psh functions on the other hand. The former generalizes the psh functions with isolated singularities.

\subsection{Plurisubharmonic functions in the Cegrell classes and their mixed Monge-Amp\`ere masses}

 Analogously to the previous section, we can define the mixed Monge-Amp\`ere mass (see e.g. \cite{D93}, \cite{R03})  $$m(\vp_1, \vp_2, \vp_3, \ldots, \vp_n) (p)$$ at a point $p \in \Omega$ of psh functions $\vp_1, \ldots, \vp_n$ to be the polarization of  $ (dd^c \vp)^n (p)$ when these are defined, for example when $\vp_1, \ldots, \vp_n$ have isolated singularities at $p$.

 The complete generality of the definition of $(dd^c \vp)^n$ was investigated by \cite{C04} (see also \cite{B06}) and as we will see, this is also where the above mixed Monge-Amp\`ere mass can be defined.
 We recall from \cite{C04}, \cite{DH} (to which we refer for more details and notations) the classes $\EE_0, \FF, \EE, \tilde{\EE}$ of psh functions which generalize those with isolated singularities.

  Let $\Omega \subset \CC^n$ be a bounded hyperconvex domain (i.e. a domain with a negative psh exhaustion function). First we define $$ \EE_0 (\Omega) = \{ \vp \in \PSH^- (\Om) : \lim_{z \to \partial \Om} \vp(z) = 0 \text{\qa and } \int_\Om  (dd^c \vp)^n < \infty \} .$$

 Second, $\FF (\Om)$ is the set of negative psh functions $\vp$ for which there exists a decreasing sequence $\vp_p$ in $\EE_0 (\Omega)$ with a uniform bound of their Monge-Amp\`ere masses $\sup_{p \ge 1} \int_\Om (dd^c \vp_p)^n < \infty$ such that $\vp$ is the decreasing limit of $\vp_p$ at every point.

 Third, $\EE(\Om)$ is the set of negative psh functions $\vp$ such that for every relatively compact $K \Subset \Om$, there exists $\vp_K$ in the class $\FF(\Om)$ such that $\vp = \vp_K$ on $K$. It is the biggest subset of $\PSH^- (\Om)$ where the Monge-Amp\`ere operator is well defined \cite{C04}.  Since our interest is naturally on the behaviour of a psh function near one point, there is no loss of generality only to consider negative psh functions such as those in $\EE$.

 On the other hand, one can also use the class $\EEt$ introduced in \cite{DH} : $\tilde{\EE}(\Om)$ is the set of (not necessarily negative) psh functions $\vp$ which is locally equal to the sum of a function from $\EE(\Om)$ and a function from $C^{\infty} (\Om)$. It is the biggest subset of $\PSH(\Om)$ where Monge-Amp\`ere operator is locally well defined \cite{DH}. If $\vp \in \PSH(\Om)$ has isolated singularity at $0 \in \Om$, then $\vp \in \tilde{\EE} (\Om)$. Also we have $\tilde{\EE} \supset \EE \supset \FF \supset \EE_0$.

  The singularities of a psh function in the Cegrell classes are typically even more complicated than the case of a psh function with isolated singularities. For instance,   the set of points $p$ where the residual Monge-Amp\`ere mass $(dd^c \vp)^n (p)$ is positive, can have a limit point, as the next example shows already in dimension $1$ (there is a similar example in all dimensions). 

\begin{example}\label{limit}

$u(z)=\log|z| + \sum_{k \ge 2} 2^{-k} \log|z-\frac{1}{k} |$. 
\end{example}

\begin{remark1}

 As seen from this example, the comments on discreteness in \cite[p.171, Remark]{C04}, \cite[end of Section 3.2]{DH} are slightly inaccurate. See (\ref{discrete}) below instead.  
\end{remark1}

\noindent For another example of bad behavior, see \cite[Example 2.1]{ACP15} for a Cegrell psh function which takes the value $-\infty$ on a dense set of points.

  Now \cite[Definition 4.3]{C04} defines  the mixed Monge-Amp\`ere current   $dd^c \vp_1 \wedge \ldots \wedge dd^c \vp_n$ as a Radon measure for  $\vp_1, \ldots, \vp_n \in \EE$, and the mixed Monge-Amp\`ere mass $m(\vp_1, \vp_2, \ldots, \vp_n) (p)$ at $p$ is defined to be  the point mass  $(dd^c \vp_1 \wedge \ldots \wedge dd^c \vp_n) (p)$. Higher Lelong numbers of $\vp$ in $\EE$ is defined by the mixed Monge-Amp\`ere mass:

  $$ \ell_k (\vp, p) = \int_{\{p\}} (dd^c \vp)^k \wedge (dd^c \log \abs{z} )^{n-k} = m(\vp, \ldots, \vp, \log \abs{z}, \ldots, \log \abs{z} )   $$

\noi where $\vp$ is repeated $k$ times in the last item, of course.

 Finally we remark that Conjecture~\ref{dem} makes sense since the approximating function $\vp_m$ has at worst isolated singularities (and thus $\ell_k (\vp_m, 0)$ is defined) by the following  

\begin{proposition}\label{discrete}

 Let $\vp \in \EE(\Om)$.  For each $m > 0$,  the set $T_m := \{ x \in \Omega: \JJ (m \vp)_x  \neq \OO_{\Om, x}  \}  $ is discrete. It follows that for every $x \in \Om$, the stalk of the multiplier ideal $\JJ (m \vp)_x$ is either equal to $\OO_{\Om, x}$ or $\mathfrak{m}$-primary where $\mathfrak{m} = \mathfrak{m}_x$.

\end{proposition}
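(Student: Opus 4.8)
The plan is to control the nontriviality locus $T_m$ by a sublevel set of the ordinary Lelong number, and then to make the finiteness of the Monge--Amp\`ere mass compete against Demailly's comparison inequality. Throughout, write $\ell_1(\vp,x)$ for the Lelong number of $\vp$ at $x$, and recall that for $\vp\in\EE(\Om)$ the current $(dd^c\vp)^n$ is a well-defined, locally finite positive measure on $\Om$ --- this is exactly the content of $\EE$ being the maximal domain of the Monge--Amp\`ere operator.

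First I would apply Skoda's integrability theorem. Since $\ell_1(m\vp,x)=m\,\ell_1(\vp,x)$, if $m\,\ell_1(\vp,x)<1$ then $e^{-2m\vp}$ is integrable near $x$, so $1\in\JJ(m\vp)_x$ and the stalk is trivial. Taking the contrapositive yields the inclusion $T_m\subseteq E_{1/m}$, where $E_c:=\{x\in\Om:\ell_1(\vp,x)\ge c\}$. It therefore suffices to prove that $E_c$ is discrete for each fixed $c>0$; since a subset of a discrete set is discrete, this gives the claim for $T_m$.

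For the discreteness of $E_c$ I would invoke Demailly's comparison of Lelong numbers with residual masses, which bounds the atomic mass below by a power of the Lelong number: $(dd^c\vp)^n(\{x\})\ge \ell_1(\vp,x)^n$ for $\vp$ in the class where the Monge--Amp\`ere operator is defined. Hence every $x\in E_c$ is an atom of $(dd^c\vp)^n$ of mass at least $c^n>0$. Fixing $K\Subset\Om$, these atoms are pairwise disjoint, so $\#(E_c\cap K)\le c^{-n}(dd^c\vp)^n(K)<\infty$; thus $E_c$ has no accumulation point in $\Om$ and is discrete. This is consistent with, and in fact forced by, the hypothesis $\vp\in\EE$: the would-be counterexample $\log\abs{z_1}$, whose Lelong number equals $1$ along the non-discrete set $\{z_1=0\}$, has no well-defined $(dd^c\,\cdot\,)^n$ and so does not lie in $\EE$.

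Finally, for the stalk statement I would use Nadel's coherence theorem: $\JJ(m\vp)$ is a coherent ideal sheaf, and $T_m$ is precisely the support of $\OO_\Om/\JJ(m\vp)$, i.e.\ its zero locus. As $T_m$ is discrete, near any of its points $x$ the zero locus reduces to $\{x\}$, so $\sqrt{\JJ(m\vp)_x}=\mfm_x$ and the stalk is $\mfm_x$-primary, while off $T_m$ it equals $\OO_{\Om,x}$. The one step that deserves care --- and the main obstacle --- is the comparison inequality: it must be quoted in the generality of the full Cegrell class $\EE$, rather than only for Bedford--Taylor products of locally bounded psh functions. Granting this input, the remainder is the short mass-counting argument above.
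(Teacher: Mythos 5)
Your proof is correct and follows essentially the same route as the paper: reduce $T_m$ to a Lelong-number superlevel set via Skoda, apply the inequality $\ell_n(\vp,x)\ge\ell_1(\vp,x)^n$ (this is exactly \cite[Corollary 2.2]{DH}, valid in the Cegrell class, which is the paper's cited input), and conclude by local finiteness of the Monge--Amp\`ere mass. You are in fact slightly more careful than the paper, which writes the superlevel set with threshold $1$ rather than $1/m$ and leaves the coherence/primary-stalk step as ``clear.''
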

\begin{proof}

From \cite[Corollary 2.2]{DH}, we know $\ell_n (\vp,x) \ge \ell_1(\vp,x)^n$. Hence we have
 
 $$T_m \subset \{ x \in \Omega: \ell_1(\vp,x) \ge 1 \} \subset  \{ x \in \Omega: \ell_n(\vp,x) \ge 1 \}  .$$
 
\noi The last set is discrete since the Monge-Amp\`ere  mass  is locally finite. Therefore $T_m$ is discrete. It is clear that the last sentence follows from this discreteness. 

\end{proof}

 \subsection{ Toric psh functions and indicators }
  We will follow \cite[Section 3]{R13b} (see also \cite{G}) for the material in this subsection. Since our interest is on the singularity of a psh function at one point, we may assume that the domain of psh functions in the following is the unit polydisk $\DD^n$ centered at $0 \in \CC^n$.

 It is well known that given a toric psh function $\vp(z_1, \ldots, z_n)$ on $\DD^n$, a convex function $\widehat{\vp}$ on $\RR^n_{-}$, increasing in each variable, is associated so that $\vp(z_1, \ldots, z_n) = \widehat{\vp} ( \log\abs{z_1}, \ldots, \log\abs{z_n})$.    Following \cite{L94}, the function $\widehat{\vp}$ was called the \emph{convex image} of $\vp$ in \cite{R13b}.

 For $\vp \in \PSH^- (\DD^n)$, there exists a toric psh function $\Psi_\vp$ on $\DD^n$ called the \emph{indicator} (at $0 \in \DD^n$) (see \cite{LR}) which computes the Kiselman number $\nu_{\vp,a}$ of $\vp$ at $0 \in \DD^n$ in the direction $a=(a_1, \ldots, a_n) \in \RR^n_{+}$ in the following sense:

 $$  \Psi_\vp (z) = - \nu_{\vp,a} , \quad  a = -(\log \abs{z_1}, \ldots, \log \abs{z_n}),$$
 for $z$ outside the coordinates hyperplanes.

 The convex image $\widehat{{\Psi}_\vp}$ determines a closed convex set $\Gamma_\vp$ in the closure of ${\RR^n_{+}}$ defined by

 $$ \Gamma_\vp = \{ a \in \overline{\RR^n_{+}}    :    \inp{a,t}  \le   \widehat{{\Psi}_\vp} (t),   \forall  t \in \RR^n_{-}   \} $$

\noi which is called the \emph{indicator diagram} of $\vp$ (at $0 \in \DD^n$).  It generalizes the Newton polyhedron associated to a monomial ideal $\mfa$ via the toric psh function corresponding to $\mfa$.

\begin{remark}

 When $\vp$ is toric psh, \cite[Lemma 1.19]{G} defines a similar object as the \textit{Newton convex body of the homogenization of the associated concave function} of $\vp$.

 \end{remark}

  The convex image of the indicator $\widehat{\Psi_\vp}$ satisfies the homogeneity $\widehat{\Psi_\vp} (ct) = c \widehat{\Psi_\vp} (t) $ for every $c > 0$.  We will call these functions \emph{abstract indicators} (see \cite{R01}, also \cite[p.1981]{R13b}).     In particular, it is the restriction to $\RR^n_{-}$ of the support function of some closed convex subset $\Gamma \subset \overline{\RR^n_+}$ (cf. \cite[Theorem 2.2.8]{H94}).

\section{Proof of Theorem~\ref{dem toric}   }

In this section, we will prove Theorem~\ref{dem toric}. We first need the following relation.

\begin{theorem}\label{r13}

 Suppose that $\vp(z)$ is a toric psh function in the Cegrell class $\EE = \EE(\DD^n)$. Then its indicator diagram $\Gamma_\vp$ at $0 \in \DD^n$ has finite covolume (in $\overline{\RR^n_{+}}$) and

\begin{equation}\label{covol}
 \ell_k (\vp, 0) = n! \Covol_k (\Gamma_\vp)
\end{equation}
\noi  for $k= 1, \ldots, n$.

\end{theorem}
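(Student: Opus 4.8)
The plan is to reduce the statement about Lelong numbers to a convex-geometric computation via the indicator diagram, exploiting the fact that for toric functions the Monge-Amp\`ere mass can be read off from the associated convex data. First I would recall the key principle, going back to the toric dictionary (e.g. \cite{R13b}), that for a toric psh function the mixed Monge-Amp\`ere mass at the origin equals a mixed covolume of the corresponding indicator diagrams, up to the normalizing constant $n!$. Concretely, writing $\Psi_\vp$ for the indicator of $\vp$ and recalling that $\log\abs{z}$ corresponds (under the convex-image correspondence) to the diagram of the standard simplex region $\Delta = \{ x \in \RR^n_{+} : x_1 + \ldots + x_n \ge 1 \}$, I would aim to establish
\begin{equation*}
 \ell_k(\vp,0) = m(\vp,\ldots,\vp,\log\abs{z},\ldots,\log\abs{z}) = n!\,\Covol_k(\Gamma_\vp),
\end{equation*}
where $\vp$ appears $k$ times. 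The identity $\ell_k(\vp,0)=m(\vp,\ldots,\vp,\log\abs{z},\ldots,\log\abs{z})$ is already recorded in Section~\ref{cobdd}, so the real content is the second equality.

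The main structural step is to show that the residual Monge-Amp\`ere mass of a toric Cegrell function at $0$ depends only on its indicator, i.e. that $m(\vp_1,\ldots,\vp_n)(0) = m(\Psi_{\vp_1},\ldots,\Psi_{\vp_n})(0)$. I would argue this by two reductions. First, replacing each $\vp_j$ by its indicator does not change the Kiselman numbers (hence the directional singularity data) at the origin, and the point mass of the mixed Monge-Amp\`ere measure is a purely local, singularity-theoretic quantity; this should follow from the comparison/convergence theorems for the mixed Monge-Amp\`ere operator in the Cegrell class together with the monotonicity of masses under comparison of singularities. Second, for indicators — which are homogeneous abstract indicators and thus supporting functions of the diagrams $\Gamma_{\vp_j}$ — I would compute the mass directly. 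For a single homogeneous indicator $\Psi$ with diagram $\Gamma$, the Legendre-type duality between $\widehat{\Psi}$ and $\Gamma$ identifies $(dd^c\Psi)^n(0)$ with $n!$ times the covolume of $\Gamma$ (the Euclidean volume of $C\setminus\Gamma$, which is finite precisely because $\vp\in\EE$ forces the total mass to be finite); this is the toric analogue of the classical fact that the Monge-Amp\`ere mass of a toric weight equals $n!$ times the volume of the associated Newton/gradient image. Polarizing this single-function identity in the $n$ arguments, and specializing $n-k$ of them to $\log\abs{z}$ (whose diagram is $\Delta$), yields exactly $n!\,\Covol_k(\Gamma_\vp)$ by the matching polarization formulas~\eqref{polari} on both sides.

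The finiteness of the covolume I would extract as a byproduct: since $\vp\in\EE(\DD^n)$, its total Monge-Amp\`ere mass near $0$ is finite, and by the mass-equals-$n!$-covolume identity for the top power ($k=n$) this is $n!\,\Covol(\Gamma_\vp)$, so $C\setminus\Gamma_\vp$ has finite volume, i.e. $\Gamma_\vp$ is cofinite. The hard part, I expect, will be justifying the reduction from $\vp$ to its indicator $\Psi_\vp$ in full Cegrell generality: unlike the locally-bounded-outside-$0$ case treated in \cite{R13a} via asymptotically analytic singularities, a general toric Cegrell function need not be asymptotically analytic, so one cannot simply approximate by analytic-singularity models. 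Instead I would need a careful limiting argument, approximating $\vp$ from within the Cegrell class by decreasing sequences in $\EE_0$ or $\FF$, controlling the convergence of the point mass at $0$ using the continuity properties of the mixed Monge-Amp\`ere operator along decreasing sequences established in \cite{C04}, and matching these with the corresponding convergence of the diagrams $\Gamma$ and their covolumes on the convex-geometry side. Ensuring that the mass carried at the single point $0$ (rather than in a neighborhood) passes to the limit correctly is the delicate point, and is where I would concentrate the technical effort.
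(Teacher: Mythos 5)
Your overall architecture matches the paper's: reduce to the indicator $\Psi_\vp$, identify $(dd^c\Psi_\vp)^n(0)$ with $n!\,\Covol(\Gamma_\vp)$ via the known cobounded/isolated-singularity case, and then polarize to get all $k$. The gap is in the step you yourself flag as the hard one, and the tools you propose for it do not suffice. Since $\vp\le\Psi_\vp+O(1)$, the comparison theorem for the class $\EE$ (the extension of Demailly's comparison theorem in \cite{ACCP}) gives only the one-sided inequality $\ell_n(\vp,0)\ge\ell_n(\Psi_\vp,0)$; ``monotonicity of masses under comparison of singularities'' cannot give the reverse inequality, because $\Psi_\vp$ is the \emph{less} singular of the two functions. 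Equality of Kiselman numbers likewise does not imply equality of residual Monge-Amp\`ere masses --- that implication is essentially of the same depth as Conjecture~\ref{dem}~(b), which is exactly what is at stake here. Moreover, decreasing approximations from $\EE_0$ or $\FF$ cannot connect $\vp$ to $\Psi_\vp$ at all, since $\Psi_\vp$ lies \emph{above} $\vp$.

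The missing ingredient is the specific increasing family $\vp_m(z)=\frac1m\vp(|z_1|^m,\ldots,|z_n|^m)$, which increases to (the upper semicontinuous regularization of) $\Psi_\vp$. The paper combines three facts: (i) Cegrell's continuity of the Monge-Amp\`ere operator along \emph{increasing} sequences, which gives $(dd^c\vp_m)^n\to(dd^c\Psi_\vp)^n$ weakly and hence $\limsup_m(dd^c\vp_m)^n(0)\le(dd^c\Psi_\vp)^n(0)$; (ii) the invariance of the point mass at $0$ under the ramified covering $z\mapsto(z_1^m,\ldots,z_n^m)$, i.e. $(dd^c\vp_m)^n(0)=(dd^c\vp)^n(0)$ for every $m$ (Lemma~\ref{vpmm}, itself proved by a smoothing and change-of-variables argument); and (iii) the comparison-theorem inequality above. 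Together these yield $\ell_n(\vp,0)=\ell_n(\Psi_\vp,0)$. Without (i) and (ii), or some substitute for them, your reduction to the indicator does not close; everything downstream (the covolume identity for indicators via the exhaustion $\Psi_N=\max\{\Psi_\vp,N\log|z_1|,\ldots,N\log|z_n|\}$, the finiteness of $\Covol(\Gamma_\vp)$, and the polarization in all $n$ slots) is consistent with what the paper actually does.
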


This generalizes \cite[Corollary 3.3]{R13b} from the isolated singularity case, in which case the indicator diagram is moreover cobounded.

\begin{proof}

Since $\vp \le \Psi_\vp + O(1)$, the indicator $\Psi_\vp$ is also in the Cegrell class $\EE$. Let us construct a sequence of abstract indicators with isolated singularities $$\Psi_N=\max\{\Psi_\vp, N\log|z_1|,\ldots,N\log|z_n|\}$$ which decreases to $\Psi_\vp$.  Let $\Gamma_N$ be the indicator diagram of $\Psi_N$. Then $\ell_n (\Psi_N) = n!  \Covol (\Gamma_N)$ holds by \cite[(20) and Corollary 3.3]{R13b}.  Since  $\Psi_\vp$ is in the Cegrell class and $\Psi_N \ge \Psi_\vp$, it follows that $\ell_n (\Psi_N)$ is bounded above by $\ell_n(\Psi_\vp)$.

  As we have the convergence $\Psi_N \to \Psi_\vp$, so do we have the indicator diagram $\Gamma_N \to \Gamma_\vp$ in the sense that the covolume of $\Gamma_\vp$ is the limit of the covolumes of $\Gamma_N$ which is finite and equal to $\ell_n (\Psi_\vp) $.

 Now it remains to show that $\ell_n (\vp)= \ell_n (\Psi_\vp)$. This will complete the proof of (2) for general $k$ as well by applying the polarization identity \eqref{polari} for multilinear symmetric functions $\mu (s_1, \ldots, s_n)$ \cite[(21)]{R13a} :
  to both $k$-th Lelong number and $k$-th covolume (see \cite[(21) and Corollary 3.3]{R13b}).

 Consider the family given by 
 
\begin{equation}\label{mtrans} 
\vp_m(z) = \frac{1}{m} \vp (|z_1|^m,\ldots, |z_n|^m).
\end{equation}
 The sequence $\vp_m$ increases to a function whose upper semicontinuous regularization coincides with the indicator $\Psi_\vp$ of $\vp$ \cite[(2.1)]{R17}   (see Remark~\ref{vpm}) :

\begin{equation}\label{indi}
 \Psi_\vp (z_1, \ldots, z_n) = \limsup_{y \to z} \lim_{m \to \infty} \frac{1}{m} \vp (\abs{y_1}^m, \ldots, \abs{y_n}^m) .
\end{equation}

 Therefore the measures $(dd^c \vp_m)^n$ converge to $(dd^c \Psi_\vp)^n$ by Cegrell's theorem on the continuity of Monge-Amp\`ere operator with respect to increasing sequences \cite[Remark, p.175]{C04} and \cite{C12}. Then  we have

 $$ \ell_n (\vp, 0) = (dd^c \vp)^n (0)= \lim_{m\to\infty} (dd^c \vp_m)^n (0) \le  (dd^c \Psi_\vp)^n (0) = \ell_n (\Psi_\vp, 0) ,$$ where the second equality is from Lemma~\ref{vpmm} and the inequality   is due to the convergence of the measures $(dd^c \vp_m)^n$.  The other direction of the inequality,  $  \ell_n (\vp, 0) \ge  \ell_n (\Psi_\vp, 0)  $, follows from the relation $\vp \le \Psi_\vp + O(1)$ and   \cite[Lemma~4.1]{ACCP} which is an extension of Demailly's comparison theorem \cite[Proposition~5.9]{D87} to functions of the class $\EE$.

\end{proof}

\begin{lemma}\label{vpmm}
We have  $(dd^c \vp)^n (0)= (dd^c \vp_m)^n (0)$ for every $m \ge 1$.

\end{lemma}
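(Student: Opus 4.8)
The plan is to realize $\vp_m$ as a rescaled pullback of $\vp$ under a finite branched covering and to track the Monge-Amp\`ere mass at $0$ through that map. Since $\vp$ is toric, hence multi-circled, one has $\vp(|z_1|^m,\dots,|z_n|^m)=\vp(|z_1^m|,\dots,|z_n^m|)=\vp(z_1^m,\dots,z_n^m)$, so by \eqref{mtrans}
\[ \vp_m=\tfrac1m\,(\vp\circ F_m), \qquad F_m(z)=(z_1^m,\dots,z_n^m). \]
The map $F_m\colon\DD^n\to\DD^n$ is proper, finite and surjective of degree $m^n$, is unramified off the coordinate hyperplanes, and satisfies $F_m^{-1}(0)=\{0\}$ with local multiplicity $m^n$ at the origin. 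I would first record the two elementary facts that, by linearity of $dd^c$, $(dd^c\vp_m)^n=m^{-n}(dd^c(\vp\circ F_m))^n$, and that the complex Monge-Amp\`ere operator is functorial under $F_m$, i.e. $(dd^c(\vp\circ F_m))^n=F_m^*\,(dd^c\vp)^n$ as measures, where $F_m^*$ denotes the pullback of measures under the finite map $F_m$.

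Granting these, the computation of the point mass is immediate. Because $F_m^{-1}(0)=\{0\}$ and $F_m$ has local degree $m^n$ at the origin, the atom of the pulled-back measure there equals $F_m^*(dd^c\vp)^n(\{0\})=m^n\,(dd^c\vp)^n(\{0\})$ (in dimension one this is the familiar $dd^c\log|z^m|=m\,dd^c\log|z|$). Combining the three steps yields
\[ (dd^c\vp_m)^n(\{0\})=m^{-n}\,F_m^*(dd^c\vp)^n(\{0\})=m^{-n}\cdot m^n\,(dd^c\vp)^n(\{0\})=(dd^c\vp)^n(\{0\}), \]
which is the assertion of the lemma.

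The main obstacle is to justify the functoriality identity $(dd^c(\vp\circ F_m))^n=F_m^*(dd^c\vp)^n$, together with the atom bookkeeping at $0$, in the Cegrell class, where $\vp$ need not be locally bounded near the origin; note that $\vp\circ F_m\in\EE$, as is already used for $\vp_m$ in the proof of Theorem~\ref{r13}. For smooth (or merely locally bounded) psh functions the identity is the classical chain rule for $dd^c$ under a holomorphic map, so I would approximate $\vp$ from above by a decreasing sequence of smooth toric psh functions (for instance by torus-invariant mollifications, or by truncations as in the proof of Theorem~\ref{r13}), apply the identity at each stage, and pass to the limit using Cegrell's continuity of the Monge-Amp\`ere operator along monotone sequences. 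The delicate point in the limit is to verify that the mass carried by $\{0\}$ is neither lost nor created, so that the atom at the origin genuinely passes to the limit on both sides; this is where comparison-type estimates for residual masses in $\EE$ (as in the results invoked in the proof of Theorem~\ref{r13}) would be needed.
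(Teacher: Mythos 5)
Your proposal follows essentially the same route as the paper: realize $\vp_m$ as $m^{-1}(\vp\circ F_m)$ for the branched covering $F_m(z)=(z_1^m,\dots,z_n^m)$, verify the change-of-variables identity for smooth regularizations $\vp_\epsilon$, and pass to the limit using the continuity of the Monge--Amp\`ere operator along monotone sequences in the Cegrell class. The one step you explicitly leave open --- ensuring the atom at the origin is neither lost nor created in the limit --- is closed in the paper not by residual-mass comparison estimates but by comparing total masses over the polydisks $\DD^n_{\delta^m}$ and $\DD^n_{\delta}$ for almost every small $\delta$ (those whose boundaries carry no mass of the limit measures, so that weak convergence yields convergence of the integrals) and then letting $\delta\to 0$ to isolate the point mass, which sidesteps having to formulate or prove the functoriality identity $(dd^c(\vp\circ F_m))^n=F_m^*(dd^c\vp)^n$ for unbounded $\vp$ altogether.
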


\begin{proof}

 Consider $T_m: \DD^n \to \DD^n$, a ramified covering of degree $mn$ of the unit polydisk given by  $ z=(z_1, \ldots, z_n) \mapsto (z_1^m, \ldots ,z_n^m)$.   Let $\vp_\epsilon \; (\ep > 0)$ be $C^{\infty}$ regularizations of $\vp$ (which we can take as the standard regularization as in e.g.  $\varphi_\epsilon(z)=\int \varphi(z+\zeta)\chi_\epsilon(\zeta)\, dV(\zeta)$).

 Then for almost all small radii $\delta>0$ (for which there is no mass of $(dd^c \vp)^n$ on the boundary of $\DD^n_\delta$),  the Monge-Amp\`ere mass of $\vp_\epsilon$ in the polydisk  $\DD^n_\delta$ with radii $\delta$  converges to that of $\vp$  by the fundamental properties of the Cegrell classes  \cite{C04}, \cite[Theorem 1.1]{B06}, hence we get, as $\ep \to 0$, 

 $$ \int_{\DD^n_{\delta^m}} (dd^c \vp_\epsilon)^n \to  \int_{\DD^n_{\delta^m}} (dd^c \vp)^n .$$
 
 \noindent Now let $\varphi_{\epsilon,m}(z) =\frac1m \varphi_\epsilon(|z_1|^m,\ldots, |z_n|^m)$ as in \eqref{mtrans}. We have

 $$ \int_{\DD^n_{\delta}} (dd^c \vp_{\epsilon, m})^n  \to  \int_{\DD^n_{\delta}} (dd^c \vp_{m})^n $$

\noindent as $\ep \to 0$, since $\varphi_\epsilon$ decrease to $\varphi$ and thus $\varphi_{\epsilon,m}$ decrease to $\varphi_m$. On the other hand,   we also  have
 $$ \int_{\DD^n_{\delta^m}} (dd^c \vp_\epsilon)^n  = \int_{\DD^n_{\delta}} (dd^c \vp_{\epsilon, m})^n   $$
from the coordinate change formula and the invariance of the Monge-Amp\`ere operator with respect to holomorphic mappings for smooth functions. Therefore when we let $\delta \to 0$, we obtain the equality to be shown.
\end{proof}

\begin{remark}\label{vpm}

  For the sequence in \eqref{indi}, its $L^1_{\loc}$ convergence was given in \cite[Theorem 8]{R00}. Also note that we indeed need upper semicontinuous regularization (over the zero set $(z_1 \ldots z_n = 0) \subset \CC^n$)  in view of an example $\vp(z_1, z_2)= -(-\log|z_1|)^{1/2}$ whose indicator is identically zero while $\vp_m(0,z_2)=-\infty$. (But outside the set $(z_1 \ldots z_n = 0)$, we do not need $\limsup_{y \to z}$ by convexity argument. )

   Note here that $\vp(z_1, z_2) \in \EE(\DD^2)$ since $\vp$ is the limit of a decreasing sequence of functions $u_\epsilon \in \EE(\DD^2)$ with uniformly bounded Monge-Amp\`ere masses near $0$, for example taking $u_\epsilon=- (-\log(|z_1|+\epsilon|z_2|))^{1/2}$.

\end{remark}

\begin{remark}

  In \cite[p.1980, Remarks]{R13b}, it is mentioned that $\ell_k (u) = \ell_k (g_u)$ is not known for $u$ in the Cegrell classes where $g_u$ is the greenification of $u$ (see \cite[p.1979]{R13b}).  Since $g_u = \Psi_u$ by \cite[Theorem 3.1]{R13b}, the equality  $\ell_n (\vp) = \ell_n (\Psi_\vp)$  in our proof confirms $\ell_k (u) = \ell_k (g_u)$  for toric $u$ in the Cegrell classes.

\end{remark}

\begin{remark}

   In general, it is not true that a toric psh function with the indicator diagram of finite covolume is in the Cegrell classes. Consider the following example by Kiselman \cite{Ki83}:  $u(z_1, z_2) = \sqrt{ \abs{ \log \abs{z_1} }    }   (\abs{z_2}^2 -1 ) $  on $\DD^2$.
(See also \cite[Example 3.32]{GZe}.) It has zero Lelong numbers at every point and its indicator diagram has zero covolume. But it has infinite Monge-Amp\`ere mass in any neighborhood of the origin.   Nevertheless, any finite covolume convex subset $\Gamma$ of $\overline{\RR^n_{+}}$ is the Newton diagram of some toric psh function $\vp$ of the Cegrell class $\EE$, see Lemma~\ref{indic}.

\end{remark}

  Actually, in the proof of Theorem~\ref{r13}, we have never used the fact that we were computing precisely the higher Lelong numbers of $\vp$  (i.e. just the mixed Monge-Amp\`ere mass of the toric Cegrell psh functions $\vp$ and $\log|z|$). Therefore, the same arguments give us

\begin{theorem}\label{r13 general}

 Suppose that $\vp_1,\ldots,\vp_n$ are toric psh function in the Cegrell class $\EE(\DD^n)$. Then
\begin{equation}\label{covol general}
 m(\vp_1,\ldots,\vp_n)= n! \Covol(\Gamma_{\vp_1},\ldots,\Gamma_{\vp_n}).
\end{equation}

\end{theorem}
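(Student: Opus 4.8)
The plan is to derive Theorem~\ref{r13 general} from Theorem~\ref{r13} by a standard polarization argument, exactly as the remark preceding the statement suggests. The key observation is that both sides of \eqref{covol general} are multilinear symmetric functions of $n$ arguments, and they agree on the diagonal $\vp_1=\cdots=\vp_n$ by Theorem~\ref{r13} (taking $k=n$, which gives $\ell_n(\vp,0)=n!\,\Covol(\Gm_\vp,\ldots,\Gm_\vp)$). Since a symmetric multilinear function is uniquely determined by its restriction to the diagonal via the polarization identity \eqref{polari}, the two sides must coincide for all tuples. The main content to verify is therefore that \emph{the proof of Theorem~\ref{r13} was genuinely about a single scalar identity that applies argument-by-argument}, rather than relying on all arguments being equal.

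First I would reexamine the argument of Theorem~\ref{r13} and observe, as the author already notes in the paragraph preceding the theorem, that nothing in that proof used the specific identity of the functions being compared: the equality $\ell_n(\vp,0)=\ell_n(\Psi_\vp,0)$ was established by (i) the family \eqref{mtrans} $\vp_m(z)=\frac1m\vp(|z_1|^m,\ldots,|z_n|^m)$ increasing to $\Psi_\vp$, (ii) the mass-invariance $(dd^c\vp)^n(0)=(dd^c\vp_m)^n(0)$ from Lemma~\ref{vpmm}, and (iii) Cegrell's continuity of the Monge-Amp\`ere operator along increasing sequences. Each of these ingredients is available for each individual $\vp_j$ separately. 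So I would replace the single function $\vp$ by the tuple $(\vp_1,\ldots,\vp_n)$, form $\vp_{j,m}(z)=\frac1m\vp_j(|z_1|^m,\ldots,|z_n|^m)$ for each $j$, and run the mixed versions of Lemma~\ref{vpmm} and the Cegrell convergence to obtain
$$ m(\vp_1,\ldots,\vp_n)(0)=\lim_{m\to\infty} m(\vp_{1,m},\ldots,\vp_{n,m})(0) = m(\Psi_{\vp_1},\ldots,\Psi_{\vp_n})(0).$$
The reverse inequalities needed to upgrade the one-sided convergence to equality come from the comparison-theorem input (the extension of Demailly's comparison theorem to the class $\EE$) applied to each $\vp_j\le\Psi_{\vp_j}+O(1)$, together with multilinearity.

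Next I would handle the convex-geometric side. For the indicators $\Psi_{\vp_1},\ldots,\Psi_{\vp_n}$, which have abstract-indicator convex images, the mixed mass $m(\Psi_{\vp_1},\ldots,\Psi_{\vp_n})(0)$ equals $n!\,\Covol(\Gm_{\vp_1},\ldots,\Gm_{\vp_n})$ by polarizing the diagonal identity $\ell_n(\Psi_\vp,0)=n!\,\Covol(\Gm_\vp,\ldots,\Gm_\vp)$ from Theorem~\ref{r13}; here I use that $\Gm_{\vp_j}=\Gm_{\Psi_{\vp_j}}$ since the indicator diagram depends only on the convex image of the indicator, which is unchanged under passing to $\Psi_{\vp_j}$. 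Combining this with the mass identity from the previous paragraph yields \eqref{covol general}.

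The step I expect to be the main obstacle is verifying the mixed analogue of Lemma~\ref{vpmm}, namely $m(\vp_1,\ldots,\vp_n)(0)=m(\vp_{1,m},\ldots,\vp_{n,m})(0)$, in the setting where the $\vp_j$ are distinct. The scalar proof of Lemma~\ref{vpmm} used the change-of-variables invariance of $(dd^c\vp_\ep)^n$ under the ramified covering $T_m(z)=(z_1^m,\ldots,z_n^m)$ for smooth $\vp_\ep$; I would need the corresponding statement that $T_m^\ast$ transforms the \emph{mixed} current $dd^c\vp_{1,\ep}\wedge\cdots\wedge dd^c\vp_{n,\ep}$ compatibly, using that pullback commutes with $dd^c$ for holomorphic maps and smooth functions, and that the covering degree $m^n$ matches the factor $\frac1m$ appearing in each of the $n$ factors. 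Once the smooth case is established, the passage to the limit $\ep\to0$ for each factor via decreasing approximation and the fundamental convergence properties of the Cegrell classes proceeds as in Lemma~\ref{vpmm}. The polarization bookkeeping, while routine, must be arranged so that the multilinearity is applied consistently on both the analytic and the convex-geometric sides.
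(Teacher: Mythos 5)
Your proposal is correct and follows essentially the same route as the paper: the paper proves Theorem~\ref{r13 general} precisely by observing that the proof of Theorem~\ref{r13} (the $m$-dilations \eqref{mtrans}, the mass invariance of Lemma~\ref{vpmm} via the ramified covering, Cegrell's continuity along increasing sequences, and the comparison theorem for $\EE$) never uses that the arguments coincide, so it runs verbatim for distinct toric Cegrell functions $\vp_1,\ldots,\vp_n$ and their indicators. Your opening polarization shortcut is also viable but needs the (standard, unstated) additivity $\Gamma_{\vp+\psi}=\Gamma_\vp+\Gamma_\psi$ coming from the additivity of Kiselman numbers, so that the diagonal identity of Theorem~\ref{r13} applied to sums $\sum_k\vp_{i_k}$ matches term by term with the polarization of the mixed covolume at the Minkowski sums $\sum_k\Gamma_{\vp_{i_k}}$.
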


\begin{remark}    If  $\psi\in\EE$, the mixed Monge-Amp\`ere mass $ m(\vp,\psi,\ldots,\psi)$ and the covolume $\Covol(\Gamma_\vp,\Gamma_\psi,\ldots,\Gamma_\psi)$ can be computed for any psh $\vp$. However, the relation (\ref{covol general}) need not be true if $\vp\not\in\EE$ (the Cegrell class), even if it is toric. Indeed, for example when $n=2$, if $\vp(z_1,z_2)= \log|z_1|$ and $\psi(z_1,z_2)=
\max\{-\sqrt{ \abs{ \log \abs{z_1} } }, \log \abs{z_2}\} $, then we have
$m(\vp,\psi)=1$ while $\Covol(\Gamma_\vp,\Gamma_\psi)=0$. In particular, this example shows that even in the toric case, one can have psh functions $\vp$ and $\psi$ such that $m(\vp,\psi) > 0$ where
 $\psi$ has vanishing first Lelong numbers (and $\vp$ not in the Cegrell class). 

\end{remark}

\medskip

\begin{proof}[Proof of Theorem~\ref{dem toric}]

 We have the following equality of the higher Lelong number and covolume for a toric Cegrell psh function from Theorem~\ref{r13}:

\begin{equation}\label{LC1}
\ell_k (\vp, 0) = n! \Covol_k (\Gamma_\vp).
\end{equation}

Now the Demailly approximation $\vp_m$ has analytic singularities given by the multiplier ideals $\JJ(m\vp)$ which are monomial ideals as $\vp$ is toric psh. Thus we have

\begin{equation}\label{LC2}
\ell_k (\vp_m, 0) = \frac{1}{m^n}\, e (\JJ (m \vp), \ldots, \JJ(m\vp), \mathfrak{m}, \ldots, \mathfrak{m}) = \frac{n!}{m^n} \Covol_k (N(\JJ (m\vp))),
\end{equation}
where the first equality is given by \cite[Lemma~2.1]{D09} and polarization from the case $k=n$. In the mixed multiplicity $e (\JJ (m \vp), \ldots, \JJ(m\vp), \mathfrak{m}, \ldots, \mathfrak{m})$, the ideals $\JJ(m\vp)$ and $\mathfrak{m}$ are repeated $k$ times and $n-k$ times, respectively.

 In \eqref{LC2}, $N(\JJ (m\vp))$ denotes the Newton polyhedron of the monomial ideal $\JJ(m\vp)$.  The second equality is well known when $k=n$ (e.g. as a special case of \cite[Theorem 8.12]{KK}, see also \cite[Remark 7.9]{KK}) and then in general by polarization. Now we have 

 \begin{lemma}\label{multi}\cite[Theorem A]{G}, \cite[Proposition 3.1]{R13b}

Let $\vp = \vp(z_1, \ldots, z_n)$ be a toric psh function. Then the multiplier ideal $\JJ(\vp)$  is a monomial ideal such that $z_1^{a_1} \ldots z_n^{a_n} \in \JJ( \vp)$ if and only if $(a_1 + 1, \ldots, a_n +1) \in \interior \Gamma_{\vp} $.

\end{lemma}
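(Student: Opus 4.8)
The plan is to work directly from the analytic definition of the multiplier ideal, $\JJ(\vp) = \{ f \in \OO_{0} : |f|^2 e^{-2\vp} \in L^1_{\loc} \text{ near } 0 \}$, exploiting the torus symmetry of $\vp$ at every step. First I would establish that $\JJ(\vp)$ is a monomial ideal. Since $\vp$ depends only on $|z_1|, \ldots, |z_n|$, the weight $e^{-2\vp}$ is invariant under the action $(z_j) \mapsto (e^{i\theta_j} z_j)$; expanding a holomorphic $f = \sum_a c_a z^a$ and integrating over the angular variables, distinct monomials $z^a$ are mutually orthogonal, so $\int |f|^2 e^{-2\vp}\, dV$ equals, up to a positive constant, $\sum_a |c_a|^2 \int |z^a|^2 e^{-2\vp}\, dV$. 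Hence $f \in \JJ(\vp)$ if and only if $z^a \in \JJ(\vp)$ for every $a$ with $c_a \neq 0$, which shows $\JJ(\vp)$ is generated by monomials and reduces the problem to deciding membership of a single $z^a$.

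Next I would pass to logarithmic coordinates. Writing $z_j = e^{t_j + i\theta_j}$ with $t_j \in (-\infty, 0)$, using $\vp = \widehat{\vp}(t)$ for the convex image $\widehat\vp$, and $dV = \prod_j e^{2t_j}\, dt_j\, d\theta_j$, the integrability of $|z^a|^2 e^{-2\vp}$ near $0$ becomes, up to the finite factor $(2\pi)^n$, convergence of
\begin{equation*}
\int_{(-\infty,0)^n} \exp\!\big( 2\langle a + \mathbf{1},\, t\rangle - 2\,\widehat\vp(t) \big)\, dt, \qquad \mathbf{1} = (1,\ldots,1).
\end{equation*}
Thus $z^a \in \JJ(\vp)$ if and only if this integral is finite, and the question reduces to the asymptotic behaviour of the concave function $h(t) = \langle a + \mathbf{1}, t\rangle - \widehat\vp(t)$ as $t \to \infty$ inside the negative orthant.

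The heart of the argument is to relate this convergence to the indicator diagram. The recession (asymptotic) function of the convex $\widehat\vp$ in the directions of $\RR^n_-$ is exactly the convex image of the indicator $\widehat{\Psi_\vp}$, which by construction is the support function of $\Gamma_\vp$ on $\RR^n_-$, namely $\widehat{\Psi_\vp}(t) = \sup_{b \in \Gamma_\vp} \langle b, t\rangle$; this is precisely the content of the Kiselman-number description recalled in Section~\ref{cobdd}. Consequently, along a ray $t = su$ with $u$ in the closed negative orthant one has $\widehat\vp(su)/s \to \widehat{\Psi_\vp}(u)$ as $s \to \infty$, so the exponential rate of the integrand is governed by $\langle a + \mathbf{1}, u\rangle - \widehat{\Psi_\vp}(u)$. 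This rate is strictly negative in every direction $u$ exactly when $\langle a + \mathbf{1}, u\rangle < \sup_{b \in \Gamma_\vp} \langle b, u\rangle$ for all such $u \neq 0$, which is the defining condition for $a + \mathbf{1} \in \interior \Gamma_\vp$ (mere membership in $\Gamma_\vp$ being the non-strict inequality). Converting these ray estimates into genuine finiteness or divergence of the $n$-dimensional integral is the step I expect to be the main obstacle: one must upgrade the direction-by-direction asymptotics to a uniform exponential decay estimate, using convexity of $\widehat\vp$ and compactness of the set of unit directions in the closed orthant, to conclude convergence when $a + \mathbf{1} \in \interior \Gamma_\vp$, and must carefully handle the boundary directions of the cone as well as boundary points of $\Gamma_\vp$, where the marginal (sub-exponential) behaviour along a supporting direction forces divergence, so that $a + \mathbf{1} \in \partial \Gamma_\vp$ is correctly excluded.

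As an independent check, when $\vp$ already has analytic (monomial) singularities the statement is Howald's description of the multiplier ideal of a monomial ideal, and the general toric case can alternatively be recovered by approximating $\vp$ from above by the functions $\Psi_N = \max\{\Psi_\vp, N\log|z_1|, \ldots, N\log|z_n|\}$ from the proof of Theorem~\ref{r13} and passing to the limit of the associated Newton diagrams; this route also matches the cited references \cite{G} and \cite{R13b}.
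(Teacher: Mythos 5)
The paper does not actually prove Lemma~\ref{multi}: it is imported from \cite[Theorem A]{G} and \cite[Proposition 3.1]{R13b}, so there is no internal argument to measure you against, and I am judging your sketch on its own. Your first two steps are correct and standard: torus-averaging (Parseval over the angular variables, with the interchange of sum and integral justified by positivity) does show that $\JJ(\vp)$ is monomial, and the logarithmic substitution correctly reduces $z^a\in\JJ(\vp)$ to the finiteness of $\int\exp\big(2\langle a+\mathbf{1},t\rangle-2\widehat\vp(t)\big)\,dt$ over a translate of the negative orthant. The divergence half is in fact easier than you anticipate: if $\langle a+\mathbf{1},u_0\rangle\ge\widehat{\Psi_\vp}(u_0)$ for some unit $u_0$ in the closed negative orthant, the monotone difference quotients give $\widehat\vp(t_0+su_0)\le\widehat\vp(t_0)+s\,\widehat{\Psi_\vp}(u_0)$, so the integrand is bounded below by a positive constant along the ray, and the monotonicity of $\widehat\vp$ in each variable propagates this bound to the infinite-volume tube $t_0+\RR_{+}u_0+[-1,0]^n$; no Lipschitz control of $\widehat{\Psi_\vp}$ transverse to the ray is needed, and both $a+\mathbf{1}\notin\Gamma_\vp$ and $a+\mathbf{1}\in\partial\Gamma_\vp$ are handled at once.

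The genuine gap is the convergence half, and your proposed tool there does not suffice. The ray-wise limit $\widehat\vp(su)/s\to\widehat{\Psi_\vp}(u)$ is monotone \emph{from below}, so it only yields the upper bound $\widehat\vp\le\widehat{\Psi_\vp}+O(1)$ along rays, which is useless for integrability; what you need is a \emph{uniform} lower bound $\widehat\vp(t)\ge\langle a+\mathbf{1},t\rangle+\delta(t_1+\cdots+t_n)^{-}\cdot(-1)\cdots$, more precisely $\langle a+\mathbf{1},t\rangle-\widehat\vp(t)\le\delta(t_1+\cdots+t_n)+C_\delta$ on the whole orthant. Compactness of the set of unit directions plus Dini-type convergence gives uniformity only on compact subsets of the \emph{open} orthant; near the boundary directions $\widehat{\Psi_\vp}$ need not be Lipschitz (or even continuous) when $\Gamma_\vp$ is merely cofinite, and the error $\widehat{\Psi_\vp}-\widehat\vp$ can be genuinely unbounded (for $\vp=-(-\log|z_1|)^{1/2}$ it equals $\sqrt{|t_1|}$), so the argument as sketched does not close. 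The clean repair is convex duality rather than compactness: extending $\widehat\vp$ by $+\infty$ outside $\overline{\RR^n_-}$, the recession function of $\widehat\vp$ is the support function of the closure of the domain of its Legendre transform $\widehat\vp^{*}$, whence $\Gamma_\vp=\overline{\mathrm{dom}\,\widehat\vp^{*}}$ and $\interior\Gamma_\vp\subset\mathrm{dom}\,\widehat\vp^{*}$; writing $a+\mathbf{1}=b'+\delta\mathbf{1}$ with $b'\in\mathrm{dom}\,\widehat\vp^{*}$ then gives exactly $\langle a+\mathbf{1},t\rangle-\widehat\vp(t)\le\delta(t_1+\cdots+t_n)+\widehat\vp^{*}(b')$, and the integral converges. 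Finally, your closing ``independent check'' is not independent: Howald's theorem applies to monomial ideals, not to the non-analytic singularities $\Psi_N$, and both the passage from $\bigcap_N\JJ(\Psi_N)$ to $\JJ(\Psi_\vp)$ and the identity $\JJ(\vp)=\JJ(\Psi_\vp)$ (only the inclusion $\subset$ is free from $\vp\le\Psi_\vp+O(1)$) are statements essentially equivalent to the lemma you are trying to prove.
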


 Applying Lemma~\ref{multi} to $m\vp$ as $m \to \infty$, we see that the Newton polyhedra $N(\JJ (m\vp))$ divided by $m$ approximates $\Gamma_\vp$ from outside in the sense that $$ \Gamma_\vp = \bigcap_{m \ge 1} \frac{1}{m} N(\JJ (m\vp))  .$$

\noi  Thus in view of \eqref{LC1} and \eqref{LC2}, the convergence $\ell_k (\vp_m) \to \ell_k (\vp)$ follows from the convergence  $\frac{1}{m^n} \Covol_k (N(\JJ (m\vp))) \to  \Covol_k (\Gamma_\vp)$.

\end{proof}

\begin{remark}\label{rr1}

 Thanks to Theorem~\ref{r13 general}, the arguments in the proof of Theorem~\ref{dem toric} also give the generalized version of Theorem~\ref{dem toric} in Remark~\ref{rr0} for general mixed Monge-Amp\`ere masses. 
 
\end{remark}

\begin{remark}\label{rr2}
On the other hand, it is not hard to see that one cannot simply reduce the general case of Conjecture~\ref{dem}~(a) to the case $k=n$ just using the polarization identity~\eqref{polari} since the $m$-th Demailly approximation $(\vp + \psi)_m$ is not equal to $\vp_m + \psi_m$ (up to $O(1)$). In fact $(\vp + \psi)_m \le \vp_m + \psi_m  + O(1)$ due to subadditivity of multiplier ideals.

\end{remark}

\section{Applications to convex geometry }

In this section, we first extend the basic theory of mixed covolumes in Section~\ref{cobdd} from the cobounded case to the cofinite case.

\subsection{Mixed covolumes: generalization to the cofinite case}

 Let $C = \overline{\RR^n_{+}}$ again in this section.

\begin{lemma}\label{indic}

 Let $\Gamma$ be a $C$-convex region with finite covolume.  Then there exists a toric psh function $\vp$ in the Cegrell class $\FF = \FF(\DD^n)$ such that $\Gamma$ is equal to the indicator diagram of $\vp$ at $0 \in \DD^n$. Moreover, $\vp$ can be taken as an indicator.  In particular,

\begin{equation}
 \ell_k (\vp, 0) = n! \Covol_k (\Gamma)
\end{equation}
\noi  for $k= 1, \ldots, n$.

\end{lemma}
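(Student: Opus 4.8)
The plan is to build $\vp$ by hand as the indicator whose convex image is the support function of $\Gm$, and then to read off every assertion from the duality between a $C$-convex region and its support function together with Theorem~\ref{r13}. Concretely, I would define the support function
$$ h_\Gm(t) = \sup_{a \in \Gm} \inp{a,t} \qquad (t \in \RR^n_{-}). $$
Since $\Gm$ is a cofinite $C$-convex region, its recession cone is exactly $C = \overline{\RR^n_{+}}$, so $h_\Gm$ is finite precisely on $\overline{\RR^n_{-}}$; moreover, as $a \ge 0$ and $t < 0$ we have $\inp{a,t} \le 0$, whence $h_\Gm \le 0$. The function $h_\Gm$ is convex, positively homogeneous of degree one, and increasing in each variable, so setting
$$ \widehat{\vp}(t) := h_\Gm(t), \qquad \vp(z) := \widehat{\vp}(\log\abs{z_1}, \ldots, \log\abs{z_n}) $$
produces a negative toric psh function on $\DD^n$ which is an abstract indicator by the homogeneity of $h_\Gm$ (Section~\ref{cobdd}); in particular $\vp$ is its own indicator, $\Psi_\vp = \vp$, which gives the ``moreover'' clause.

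Next I would identify $\Gm_\vp$ with $\Gm$. By the definition of the indicator diagram,
$$ \Gm_\vp = \{ a \in \overline{\RR^n_{+}} : \inp{a,t} \le \widehat{\vp}(t) = h_\Gm(t), \ \forall t \in \RR^n_{-} \}, $$
which is exactly the set recovered from $\Gm$ by the biconjugation (bipolar) theorem for closed convex sets: as $h_\Gm \equiv +\infty$ off $\overline{\RR^n_{-}}$, those constraints are vacuous, and the closedness, convexity and $C$-convexity of $\Gm$ give $\Gm_\vp = \Gm$. This is the step where the convex-geometric input (finite recession cone, closedness) enters, and it is essentially formal.

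The main work, and the expected obstacle, is to verify $\vp \in \FF(\DD^n)$. Here I would reuse the truncation from the proof of Theorem~\ref{r13}: set
$$ \vp_N = \max\{\vp, N\log\abs{z_1}, \ldots, N\log\abs{z_n}\}, $$
a decreasing sequence of abstract indicators whose singularity at $0$ is isolated (off the coordinate axes the finite terms $N\log\abs{z_j}$ dominate the truncated one), decreasing to $\vp$. Each $\vp_N$ has a \emph{cobounded} indicator diagram $\Gm_N \supseteq \Gm$ because $\vp_N \ge \vp$, so the already-known cobounded case \cite[Corollary 3.3]{R13b} yields
$$ \int_{\DD^n} (dd^c \vp_N)^n = \ell_n(\vp_N, 0) = n!\,\Covol(\Gm_N) \le n!\,\Covol(\Gm) < \infty, $$
the uniform bound coming precisely from the finiteness of $\Covol(\Gm)$. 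This uniform control of the Monge-Amp\`ere masses of a decreasing sequence is what the definition of $\FF(\DD^n)$ demands, giving $\vp \in \FF$. The delicate points to check are that the $\vp_N$ (after the standard modification near $\partial\DD^n$ forcing boundary value $0$) genuinely lie in $\EE_0$, and that no mass escapes in the limit; this boundary bookkeeping is where care is needed, since an indicator need not tend to $0$ at $\partial\DD^n$.

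Finally, with $\vp \in \FF \subset \EE$ in hand, Theorem~\ref{r13} applies directly and, combined with $\Gm_\vp = \Gm$ from the second step, yields
$$ \ell_k(\vp, 0) = n!\,\Covol_k(\Gm_\vp) = n!\,\Covol_k(\Gm), \qquad k = 1, \ldots, n, $$
which is the displayed identity and completes the proof.
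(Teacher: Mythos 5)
Your proposal is correct and follows essentially the same route as the paper: take $\vp$ to be the indicator whose convex image is the support function of $\Gamma$, truncate by $\Psi_N=\max\{\vp, N\log|z_1|,\ldots,N\log|z_n|\}$ to get a decreasing sequence in $\EE_0$ whose Monge--Amp\`ere masses are uniformly bounded by $n!\,\Covol(\Gamma)$ via the cobounded case of \cite[Corollary 3.3]{R13b}, conclude $\vp\in\FF$, and then invoke Theorem~\ref{r13}. Two small remarks: your inclusion $\Gamma_N\supseteq\Gamma$ is the correct direction for deducing $\Covol(\Gamma_N)\le\Covol(\Gamma)$ (the paper's text states the reversed inclusion, evidently a typo), and the boundary issue you flag as delicate is automatic here, since the truncation forces $\Psi_N=0$ on $\partial\DD^n$ so no further modification is needed for membership in $\EE_0$.
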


\begin{proof}

The following argument generalizes \cite[Example 3.9]{R13a}. Given cofinite $\Gm \subset C = \overline{\RR^n_{+}}$, certainly we can approximate $\Gm$ by cobounded $C$-regions. For example, take the support function of $\Gm$ which is the convex image of an indicator function $\Psi$.  We will show that we can take $\vp = \Psi$.  Take again the same sequence $\Psi_N$ as in the proof of Theorem~\ref{r13}:

 $$\Psi_N (z_1, \ldots, z_n) =\max\{\Psi(z_1, \ldots, z_n), N\log|z_1|,\ldots,N\log|z_n|\}$$ decreasing to $\Psi$.  Let $\Gamma_N$ be the indicator diagram of $\Psi_N$, which is cobounded since $\Psi_N$ has isolated singularity. Then   $\ell_n (\Psi_N) = n!  \Covol (\Gamma_N)$ holds by \cite[(20) and Corollary 3.3]{R13b}.   Since $\Psi_N \ge \Psi$, we have $\Gamma_N \subset \Gamma$.   From \cite[Proposition 1,(d)]{R00},  we have $\int_\Om (dd^c \Psi)^n = \ell_n (\Psi_N) = n!  \Covol (\Gamma_N) \le n! \Covol (\Gamma) $.

 On the other hand, $\Psi_N$ has value zero on the boundary $\partial \Om$. Thus $\Psi_N \in \EE_0$ with uniform bound less than the finite covolume of given $\Gamma$.  Since $\Psi$ is the decreasing limit of $\Psi_N$'s, we have $\Psi \in \FF$.  Now apply Theorem~\ref{r13} to $\vp = \Psi$ and we are done.

\end{proof}

 Let $\Gm_1, \Gm_2$ be $C$-convex regions. Then for $\ld_1, \ld_2  > 0$,  $\ld_1 \Gm_1 + \ld _2 \Gm_2 = \{ \ld_1 x_1 + \ld_2 x_2  : x_1 \in \Gm_1, x_2 \in \Gm_2 \} $  is also a $C$-convex region by \cite[Proposition 10.3]{KK13}.

 \begin{proposition}

 If $\Gm_1$ and $\Gm_2$ are cofinite, then $\ld_1 \Gm_1 + \ld _2 \Gm_2$ is cofinite.

 \end{proposition}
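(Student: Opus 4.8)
The plan is to transport the problem to complex analysis via the correspondence established in Lemma~\ref{indic} and Theorem~\ref{r13}, rather than to estimate the volume of $C\setminus(\ld_1\Gm_1+\ld_2\Gm_2)$ directly (a direct estimate is deceptively delicate, since cofiniteness is governed by the thin regions near the coordinate axes). Since $\ld_1\Gm_1+\ld_2\Gm_2$ is already known to be a closed $C$-convex region by the discussion preceding the statement (cf. \cite[Proposition 10.3]{KK13}), it remains only to show that it has finite covolume.

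First I would apply Lemma~\ref{indic} to each $\Gm_i$ to produce a toric indicator $\Psi_i\in\FF(\DD^n)$ whose indicator diagram equals $\Gm_i$; by construction its convex image $\widehat{\Psi_i}$ is the restriction to $\RR^n_{-}$ of the support function of $\Gm_i$. Then I would set $\Phi:=\ld_1\Psi_1+\ld_2\Psi_2$, which is again toric and psh, and note that $\Phi$ lies in the Cegrell class $\EE(\DD^n)$ because $\FF$ (hence $\EE$) is closed under addition and multiplication by positive constants. This convex-cone property follows from the Cegrell inequalities bounding the mixed Monge-Amp\`ere masses of a sum by the individual total masses, so that a positive combination of functions in $\FF$ is again a decreasing limit of $\EE_0$ functions with uniformly bounded masses.

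The key step is to identify the indicator diagram of $\Phi$ with the Minkowski combination $\ld_1\Gm_1+\ld_2\Gm_2$. The convex image is linear in the underlying function, so $\widehat{\Phi}=\ld_1\widehat{\Psi_1}+\ld_2\widehat{\Psi_2}$; this is again positively $1$-homogeneous, hence $\Phi$ is an abstract indicator. Since each $\widehat{\Psi_i}$ is the support function of $\Gm_i$ on $\RR^n_{-}$ and support functions are additive under Minkowski combination, $\widehat{\Phi}$ is the support function of $\ld_1\Gm_1+\ld_2\Gm_2$. Because the latter is already a closed $C$-convex region, it is recovered from its support function, and the defining formula $\Gamma_\Phi=\{a\in C:\langle a,t\rangle\le\widehat{\Phi}(t),\ \forall t\in\RR^n_{-}\}$ then yields $\Gamma_\Phi=\ld_1\Gm_1+\ld_2\Gm_2$.

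Finally I would invoke Theorem~\ref{r13} for the toric Cegrell function $\Phi$: its indicator diagram $\Gamma_\Phi$ has finite covolume in $\overline{\RR^n_{+}}$. Since $\Gamma_\Phi=\ld_1\Gm_1+\ld_2\Gm_2$, this is exactly the asserted cofiniteness. I expect the main obstacle to be the identification carried out in the third step: one must verify that passing to convex images (support functions) intertwines positive linear combinations of indicators with Minkowski combinations of their diagrams, and that the resulting convex set is genuinely determined by $\widehat{\Phi}$. By contrast, the membership $\Phi\in\EE$ is routine Cegrell theory (by comparison, e.g.\ via $\vp\le\Phi\le 0$ type bounds for the approximants), and the final appeal to Theorem~\ref{r13} is immediate.
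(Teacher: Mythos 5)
Your proposal is correct and follows essentially the same route as the paper: pass to the indicator functions $\psi_i\in\FF(\DD^n)$ via Lemma~\ref{indic}, use additivity of support functions to identify the indicator diagram of $\ld_1\psi_1+\ld_2\psi_2$ with $\ld_1\Gm_1+\ld_2\Gm_2$, and conclude finiteness of the covolume from the fact that $\ld_1\psi_1+\ld_2\psi_2$ lies in the cone $\FF$ and hence has finite Monge-Amp\`ere mass. Your variant of the last step (invoking Theorem~\ref{r13} for the toric Cegrell function $\Phi$ rather than quoting Lemma~\ref{indic} for the combined region) is a slightly cleaner phrasing of the same argument, since it avoids applying the lemma to a region whose cofiniteness is the thing being proved.
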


\begin{proof}

 From the basic properties of support functions (cf. \cite[p.75]{H94}),  the indicator function given by Lemma~\ref{indic} for $\ld_1 \Gm_1 + \ld _2 \Gm_2$ is $\ld_1 \psi_1 + \ld_2 \psi_2$.   By the same lemma, $\Covol (\ld_1 \Gm_1 + \ld_2 \Gm_2 ) = \ell_n (\ld_1 \psi_1 + \ld_2 \psi_2, 0)$ which is finite since $\ld_1 \psi_1 + \ld_2 \psi_2$ is in the Cegrell class $\FF$.

\end{proof}

\begin{theorem}

 Let $\Gamma_1, \ldots, \Gamma_n$ be cofinite $C$-convex regions. Then the function

\begin{equation}
 P(\ld_1,\ldots, \ld_n) = \Covol (\ld_1 \Gamma_1 + \ldots + \ld_n \Gamma_n )
\end{equation}

\noindent is a homogeneous polynomial of degree $n$ in $\ld_1, \ldots, \ld_n$.

\end{theorem}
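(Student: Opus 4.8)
The plan is to reduce the statement to the multilinearity of the mixed Monge-Amp\`ere mass through the dictionary between cofinite $C$-convex regions and indicators established in Lemma~\ref{indic}, in keeping with the complex-analytic strategy of this paper. First I would use Lemma~\ref{indic} to realize each $\Gm_i$ as the indicator diagram $\Gm_{\psi_i}$ of an indicator $\psi_i$ in the Cegrell class $\FF(\DD^n)$. Fix $\ld_1,\ldots,\ld_n > 0$. Iterating the preceding Proposition shows that $\ld_1\Gm_1 + \ldots + \ld_n\Gm_n$ is again a cofinite $C$-convex region, so by Lemma~\ref{indic} it is the indicator diagram of some indicator in $\FF$.

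The geometric heart of the argument is to identify that indicator explicitly as $\ld_1\psi_1 + \ldots + \ld_n\psi_n$. Since each $\Gm_i$ has support function equal to the convex image $\widehat{\psi_i}$ restricted to $\RR^n_-$, and the support function of a Minkowski combination is the same combination of support functions, the convex image of $\sum_i \ld_i \psi_i$ is the support function of $\sum_i \ld_i \Gm_i$; because an indicator is determined by its convex image (equivalently, by its diagram), the indicator of $\sum_i \ld_i\Gm_i$ coincides with $\sum_i \ld_i\psi_i$. This is exactly the $n$-summand analogue of the support-function computation in the proof of the preceding Proposition, and it is the step I expect to require the most care: one must confirm both that the support-function additivity and the membership $\sum_i \ld_i\psi_i \in \FF$ persist for an arbitrary number of cofinite pieces, the latter following from the cofiniteness of $\sum_i\ld_i\Gm_i$ via Lemma~\ref{indic}.

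With this identification in hand, I would apply the equality $\ell_n(\psi,0) = n!\,\Covol(\Gm_\psi)$ from Lemma~\ref{indic} (the case $k=n$) to $\psi = \sum_i \ld_i\psi_i$, obtaining
$$ P(\ld_1,\ldots,\ld_n) = \Covol\Big(\sum_{i} \ld_i \Gm_i\Big) = \frac{1}{n!}\,\big(dd^c(\textstyle\sum_i \ld_i\psi_i)\big)^n(0) $$
for all $\ld_i > 0$. Finally, using the linearity $dd^c(\sum_i \ld_i\psi_i) = \sum_i \ld_i\, dd^c\psi_i$ together with the multilinearity and symmetry of the mixed Monge-Amp\`ere operator on the Cegrell class \cite{C04}, I would expand the $n$-fold wedge product to get
$$ P(\ld_1,\ldots,\ld_n) = \frac{1}{n!}\sum_{1\le i_1,\ldots,i_n\le n} \ld_{i_1}\cdots\ld_{i_n}\, m(\psi_{i_1},\ldots,\psi_{i_n})(0), $$
where each coefficient $m(\psi_{i_1},\ldots,\psi_{i_n})(0)$ is a finite nonnegative number since $\psi_{i_1},\ldots,\psi_{i_n} \in \FF \subset \EE$. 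This is manifestly a homogeneous polynomial of degree $n$ in $\ld_1,\ldots,\ld_n$, coinciding with $P$ on the domain $\ld_i>0$ where $P$ is defined, which proves the theorem. As a byproduct the polarization of $P$ recovers $\Covol(\Gm_{i_1},\ldots,\Gm_{i_n}) = \frac{1}{n!}\,m(\psi_{i_1},\ldots,\psi_{i_n})(0)$, consistent with Theorem~\ref{r13 general}.
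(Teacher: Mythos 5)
Your proposal is correct and follows essentially the same route as the paper: realize each $\Gamma_i$ as the indicator diagram of an indicator $\psi_i \in \FF(\DD^n)$ via Lemma~\ref{indic}, identify the indicator of the Minkowski combination with $\sum_i \ld_i\psi_i$, and expand $(dd^c(\sum_i \ld_i\psi_i))^n$ by multilinearity of the mixed Monge--Amp\`ere operator before taking the mass at $0$. You merely spell out more explicitly the support-function identification that the paper leaves implicit (having stated it only in the two-term case in the preceding Proposition), which is a welcome clarification but not a different argument.
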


\begin{proof}

 Let $\psi_1, \ldots, \psi_n$ be the indicator functions given by Lemma~\ref{indic}. Then $ (dd^c (\ld_1 \psi_1 + \ldots + \ld_n \psi_n) )^n $  is expanded as a homogeneous polynomial of degree $n$ in $\ld_1, \ldots, \ld_n$ with coefficients given by mixed Monge-Amp\`ere currents such as $(dd^c \psi_1)^n$ etc. By taking the mass at the point $0 \in \DD^n$, the theorem is proved.

\end{proof}

\subsection{Proof of Theorem~\ref{af}}

  Let $\vp_1, \ldots, \vp_n \in \EE(\Omega)$. By \cite[p.521]{B06}, belonging to $\EE$ is a local property, thus to show the inequality at $p$, we may assume that the domain $\Omega$ equals $B$, a ball centered at $p$. Moreover by definition of $\EE$ in \cite{C04}, there is a neighborhood $U$ of $p$ where $\vp_j = \psi_j$ on $U$ for some $\psi_j \in \FF(B)$ for every $1 \le j \le n$. Since the mass at $p$ only depends on the restriction of the Monge-Amp\`ere current to a neighborhood of $p$,  we may assume $\vp_1, \ldots, \vp_n \in \FF(B)$.

  By definition of the class $\FF$, for each $\vp_j$, there exists a sequence $\vp_{j,i}$ in $\EE_0$ converging to $\vp_j$. Applying \cite[Lemma 5.4]{C04} for $T = -h ( dd^c \vp_{3,i}) \wedge \ldots \wedge (dd^c \vp_{n,i})$, $p=q=1$, we get 

\begin{align*}
   &\left(\int_B -h dd^c \vp_{1,i} \wedge dd^c \vp_{2,i} \wedge dd^c \vp_{3,i} \wedge \ldots \wedge dd^c \vp_{n,i} \right)^2 \\
   \le &   \left( \int_B -h dd^c \vp_{1,i} \wedge dd^c \vp_{1,i} \wedge dd^c \vp_{3,i} \wedge \ldots \wedge dd^c \vp_{n,i} \right) \times \\
    & \quad \left( \int_B -h dd^c \vp_{2,i} \wedge dd^c \vp_{2,i} \wedge dd^c \vp_{3,i} \wedge \ldots \wedge dd^c \vp_{n,i} \right).
\end{align*}

 Now when we let $i \to \infty$ and apply \cite[Prop. 5.1]{C04}, we get the corresponding inequality for $\vp_1, \ldots, \vp_n$. As in the proof of \cite[Corollary 5.7]{C04} and \cite[Lemma 2.1]{DH}, let $h = h_r = \max ( \frac{1}{r} \log \abs{z}, -1 ) \in \EE_0 (B)$.  As $r \to \infty$, $h_r$ converges to the negative of the characteristic function of the point $0 \in \CC^n$.  By the monotone convergence theorem, Theorem~\ref{af} is proved. \qed

 \begin{remark}\label{dh21}

 In Theorem~\ref{af}, choosing each of $\vp_3,\ldots,\vp_n$ equal to either $\vp_1$ or $\vp_2$ recovers \cite[Lemma 2.1]{DH}. Note that
 in its proof provided in \cite{DH}, replacing $\vp$ with $\tilde{\vp} := \max\{\vp-C, p\log|z|\}$ was done without justifying that $\ell_k(\vp)$ remains the same. Since Conjecture~\ref{dem} (b) is known to be true for functions dominated by  $ p \log \abs{z}$ with $p > 0$ \cite{W05}, \cite{R16}, this is in fact as nontrivial as Conjecture~\ref{dem} (b) to check. The proof of Theorem~\ref{af} makes up necessary details. 

 \end{remark}

  Finally, we get an analytic proof of the Alexandrov-Fenchel inequality for mixed covolumes, i.e. Corollary~\ref{AFM}, by applying Lemma~\ref{indic} to Theorem~\ref{af}.

\footnotesize

\bibliographystyle{amsplain}

\begin{thebibliography}{widest-}

\renewcommand{\baselinestretch}{1.0}


\bibitem[ACP15]{ACP15} P. {\AA}hag, U. Cegrell and H.H. Pham, \emph{Monge-Amp\`ere measures on subvarieties}, J. Math. Anal. Appl. 423 (2015), no. 1, 94-105. 


\bibitem[ACCP]{ACCP}
 P. {\AA}hag, U. Cegrell, R. Czyz and H.H. Pham,
\emph{Monge-Amp\`ere measures on pluripolar sets}, J. Math. Pures Appl. (9)  92 (2009), no. 6, 613-627.

\bibitem[Be17]{Be17} B. Berndtsson, \emph{Real and complex Brunn-Minkowski theory}, Analysis and geometry in several complex variables, 1-27, Contemp. Math., 681, Amer. Math. Soc., Providence, RI, 2017.

\bibitem[B06]{B06} Z. B\l ocki, \emph{The domain of definition of the complex Monge-Amp\`ere operator}, Amer. J. Math. 128 (2006), no. 2, 519-530.

\bibitem[BFJ08]{BFJ08} S. Boucksom, C. Favre and M. Jonsson, \emph{Valuations and plurisubharmonic singularities}, Publ. Res. Inst. Math. Sci. 44 (2008), no. 2, 449-494.

\bibitem[BZ]{BZ}   D. Burago and V. Zalgaller, \emph{Geometric inequalities}, Grundlehren der Mathematischen Wissenschaften, 285. Springer-Verlag, Berlin, 1988.

\bibitem[C04]{C04} U. Cegrell, \textit{The general definition of the complex Monge-Amp\`ere operator},
Ann. Inst. Fourier (Grenoble) \textbf{54} (2004), no. 1, 159-179.

\bibitem[C12]{C12} U. Cegrell, \emph{Convergence in capacity}, Canad. Math. Bull. 55 (2012), no. 2, 242-248.

\bibitem[D87]{D87} J.-P. Demailly, \emph{Nombres de Lelong g\'en\'eralis\'es, th\'eor\`emes d'int\'egralit\'e et d'analyticit\'e}, Acta Math. 159 (1987), no. 3-4, 153-169.

\bibitem[D92]{D92} J.-P. Demailly, \emph{Regularization of closed positive currents and intersection theory}, J. Algebraic Geom. 1 (1992), no. 3, 361-409.


\bibitem[D93]{D93} J.-P. Demailly, \emph{Monge-Amp\`ere operators, Lelong numbers and intersection theory}, Complex analysis and geometry, 115-193, Univ. Ser. Math., Plenum, New York, 1993.



\bibitem[D09]{D09} J.-P. Demailly, \textit{Estimates on Monge-Amp\`ere operators derived from a local algebra inequality}, Complex analysis and digital geometry, 131-143, Acta Univ. Upsaliensis Skr. Uppsala Univ. C Organ. Hist., 86, Uppsala Universitet, 2009.


\bibitem [DX]{DX} J.-P. Demailly, \emph{Complex analytic and differential geometry}, self-published e-book, Institut Fourier,
455 pp, last version: June 21, 2012.





\bibitem[DH]{DH}  J.-P. Demailly and  H.H. Pham, \textit{A sharp lower bound for the log canonical threshold}, Acta Math. 212 (2014), no. 1, 1-9.

\bibitem[D12]{D12} J.-P. Demailly and H.H. Pham, \textit{A sharp lower bound for the log canonical threshold}, Slides of the talk given by J.-P. Demailly at Nordan 2012 conference (and also at 2012 LMS Durham Symposium on `Interactions of birational geometry with other fields'), available at http://www-fourier.ujf-grenoble.fr/$\sim$demailly/documents.html.



\bibitem[DGZ]{DGZ} S. Dinew, V. Guedj and A. Zeriahi, \emph{Open problems in pluripotential theory}, Complex Var. Elliptic Equ. 61 (2016), no. 7, 902-930.







\bibitem[G02]{G02}  R.J. Gardner, \emph{The Brunn-Minkowski inequality}, Bull. Amer. Math. Soc. (N.S.) 39 (2002), no. 3, 355-405. 


\bibitem[Gr90]{Gr90} M. Gromov, \emph{Convex sets and K\"ahler manifolds}, Advances in differential geometry and topology, 1–38, World Sci. Publ., Teaneck, NJ, 1990. 

\bibitem[G]{G} H. Guenancia, \textit{Toric plurisubharmonic functions and analytic adjoint ideal sheaves}, Math. Z. {271} (2012), 1011-1035.


\bibitem[GZe]{GZe} V. Guedj and A. Zeriahi, \emph{Degenerate complex Monge-Amp\`ere equations}, EMS Tracts in Mathematics, 26. European Mathematical Society, Z\"urich, 2017.


\bibitem[H94]{H94} L. H\"ormander, \emph{Notions of convexity}, Progress in Mathematics, 127. Birkh\"auser Boston, Inc., Boston, MA, 1994.

\bibitem[KK12]{KK12}  K. Kaveh and A. Khovanskii, \textit{Newton-Okounkov bodies, semigroups of integral points,
graded algebras and intersection theory}, Ann. Math., 176 (2012), 1-54.

\bibitem[KK]{KK}  K. Kaveh and A. Khovanskii, \textit{Convex Bodies and Multiplicities of Ideals}, Proceedings of the Steklov Institute of Mathematics, 2014, Vol. 286, pp. 268-284.

\bibitem[KK13]{KK13} K. Kaveh and A. Khovanskii, \emph{On mixed multiplicities of ideals},  arXiv:1310.7979.



\bibitem[KT14]{KT14} A. Khovanskii and V. Timorin, \emph{On the theory of coconvex bodies},
Discrete Comput. Geom. 52 (2014), no. 4, 806 - 823.

\bibitem[Ki83]{Ki83} C. Kiselman, \emph{Sur la d\'efinition de l'op\'erateur de Monge-Amp\`ere complexe}, Analyse Complexe, Toulouse, 1983. Lecture Notes in Math., 1094, Springer, Berlin, 1984,
139-150.


\bibitem[L]{L} R. Lazarsfeld, \textit{Positivity in Algebraic Geometry},  Ergebnisse der Mathematik und ihrer Grenzgebiete. 3. Folge. 48 - 49. Springer-Verlag, Berlin, 2004.

\bibitem[L94]{L94} P. Lelong,  \emph{Mesure de Mahler et calcul de constantes universelles pour les polyn\^{o}mes de n variables}, Math. Ann. 299 (1994), no. 4, 673-695.

\bibitem[LR]{LR} P. Lelong and A. Rashkovskii, \emph{Local indicators for plurisubharmonic functions}, J. Math. Pures Appl. 78 (1999), 233-247.

\bibitem[MR14]{MR14} E. Milman and L. Rotem, \emph{Complemented Brunn-Minkowski inequalities and isoperimetry for homogeneous and non-homogeneous measures}, Adv. Math. 262 (2014), 867-908. 

\bibitem[M02]{M02} M. Musta\c{t}\v{a}, \emph{On multiplicities of graded sequences of ideals}, J. Algebra 256 (2002), no. 1, 229-249.


\bibitem[R00]{R00} A. Rashkovskii, \emph{Newton numbers and residual measures of plurisubharmonic functions}, Ann. Polon. Math. 75(3) (2000), 213-231.

 
\bibitem[R01]{R01} A. Rashkovskii, \emph{Lelong numbers with respect to regular plurisubharmonic weights}, Results Math. 39 (2001), no. 3-4, 320-332.

\bibitem[R03]{R03} A. Rashkovskii, \emph{Total masses of mixed Monge-Amp\`ere currents}, Michigan Math. J. 51 (2003), 169-185.



\bibitem[R13a]{R13a} A. Rashkovskii, \textit{Analytic approximations of plurisubharmonic singularities}, Math. Z. 275 (2013), no. 3-4, 1217-1238.

\bibitem[R13b]{R13b} A. Rashkovskii, \textit{Multi-circled singularities, Lelong numbers, and integrability index}, J. Geom. Anal. 23 (2013), no. 4, 1976-1992.

\bibitem[R16]{R16} A. Rashkovskii, \emph{Some problems on plurisubharmonic singularities}, Mat. Stud. 45 (2016), 104-108.

\bibitem[R17]{R17}  A. Rashkovskii, \emph{Copolar convexity}, Ann. Polon. Math. 120 (2017), no. 1, 83-95.

\bibitem[RS]{RS} D. Rees and R. Sharp, \emph{On a theorem of B. Teissier on multiplicities of ideals in local rings},
J. London Math. Soc. (2) 18 (1978), no. 3, 449-463.


\bibitem[S13]{S13} R. Schneider, \emph{Convex bodies: the Brunn-Minkowski theory}, Second expanded edition. Encyclopedia of Mathematics and its Applications, 151. Cambridge University Press, Cambridge, 2014.

\bibitem[S18]{S18} R. Schneider, \emph{A Brunn-Minkowski theory for coconvex sets of finite volume}, Adv. Math. 332 (2018), 199-234. 

\bibitem[S85]{S85} N. Sibony, \emph{Quelques probl\`emes de prolongement de courants en analyse complexe}, Duke Math. J. 52 (1985), no. 1, 157-197.

\bibitem[T73]{T73}
{B. Teissier}, {\it  Cycles \'evanescents, sections planes et conditions de Whitney},  Singularit\'es \`a Carg\`ese, pp. 285-362. Asterisque, Nos. 7 et 8, Soc. Math. France, Paris, 1973.
  
\bibitem[T77]{T77} B. Teissier, \textit{Sur une in\' egalit\' e \`a la Minkowski pour les multiplicit\'es}, Ann. Math.
(2) {106} (1977), no. 1, 38-44.

\bibitem[T04]{T04} B. Teissier, \emph{Monomial ideals, binomial ideals, polynomial ideals}, Trends in commutative algebra, 211-246,  Math. Sci. Res. Inst. Publ., 51, Cambridge Univ. Press, Cambridge, 2004. 

\bibitem[W17]{W17} X. Wang, \emph{A remark on the Alexandrov-Fenchel inequality}, J. Funct. Anal. 274 (2018), no. 7, 2061-2088. 



\bibitem[W05]{W05} J. Wiklund, \emph{Pluricomplex charge at weak singularities}, arXiv:math/0510671.


\end{thebibliography}

\qa

\normalsize

\noi \textsc{Dano Kim}

\noi Department of Mathematical Sciences \& Research Institute of Mathematics

\noi Seoul National University, 08826  Seoul, Korea

\noi Email address: kimdano@snu.ac.kr

\qa
\qa

\noi

\noi \textsc{Alexander Rashkovskii}

\noi Tek/nat, University of Stavanger, 4036 Stavanger, Norway

\noi Email address: alexander.rashkovskii@uis.no
\noi

\end{document}